\newtheorem{thm}{Theorem}[section]
\newtheorem{lem}[thm]{Lemma}
\newtheorem{prop}[thm]{Proposition}
\newtheorem{theorem}{Theorem}[section]
\newtheorem{lemma}[theorem]{Lemma}
\newtheorem{proposition}[theorem]{Proposition}
\newtheorem{remark}[theorem]{Remark}
\theoremstyle{definition}
\newtheorem{definition}[theorem]{Definition}
\newcommand{\cA}{\ensuremath{\mathcal{A}}}
\newcommand{\cC}{\ensuremath{\mathcal{C}}}
\newcommand{\cE}{\ensuremath{\mathcal{E}}}
\newcommand{\cH}{\ensuremath{\mathcal{H}}}
\newcommand{\cT}{\ensuremath{\mathcal{T}}}
\newcommand{\cV}{\ensuremath{\mathcal{V}}}
\newcommand{\bN}{\ensuremath{\mathbb{N}}}
\newcommand{\bR}{\ensuremath{\mathbb{R}}}
\newcommand{\bbu}{\boldsymbol{u}}
\newcommand{\bbv}{\boldsymbol{v}}
\newcommand{\bbm}{\boldsymbol{m}}
\newcommand{\bbw}{\boldsymbol{w}}
\newcommand{\bbF}{\boldsymbol{F}}
\newcommand{\bbG}{\boldsymbol{G}}
\newcommand{\bbJ}{\boldsymbol{J}}
\newcommand{\bbphi}{\boldsymbol{\phi}}
\newcommand{\bbun}{\boldsymbol{1}}
\newcommand{\un}{\mathds 1}
\def\1{\boldsymbol{1}}
\newcommand{\logLogSlopeTriangle}[5]
{
	\pgfplotsextra
	{
		\pgfkeysgetvalue{/pgfplots/xmin}{\xmin}
		\pgfkeysgetvalue{/pgfplots/xmax}{\xmax}
		\pgfkeysgetvalue{/pgfplots/ymin}{\ymin}
		\pgfkeysgetvalue{/pgfplots/ymax}{\ymax}
		
		% Calculate auxilliary quantities, in relative sense.
		\pgfmathsetmacro{\xArel}{#1}
		\pgfmathsetmacro{\yArel}{#3}
		\pgfmathsetmacro{\xBrel}{#1-#2}
		\pgfmathsetmacro{\yBrel}{\yArel}
		\pgfmathsetmacro{\xCrel}{\xArel}
		%\pgfmathsetmacro{\yCrel}{ln(\yC/exp(\ymin))/ln(exp(\ymax)/exp(\ymin))}
		
		\pgfmathsetmacro{\lnxB}{\xmin*(1-(#1-#2))+\xmax*(#1-#2)} % in [xmin,xmax].
		\pgfmathsetmacro{\lnxA}{\xmin*(1-#1)+\xmax*#1} % in [xmin,xmax].
		\pgfmathsetmacro{\lnyA}{\ymin*(1-#3)+\ymax*#3} % in [ymin,ymax].
		\pgfmathsetmacro{\lnyC}{\lnyA-#4*(\lnxA-\lnxB)}
		\pgfmathsetmacro{\yCrel}{\lnyC-\ymin)/(\ymax-\ymin)}
		
		% Define coordinates for \draw. MIND THE 'rel axis cs' as opposed to the 'axis cs'.
		\coordinate (A) at (rel axis cs:\xArel,\yArel);
		\coordinate (B) at (rel axis cs:\xBrel,\yBrel);
		\coordinate (C) at (rel axis cs:\xCrel,\yCrel);
		
		% Draw slope triangle.
		\draw[#5]   (A)-- node[pos=0.5,anchor=north] {\scriptsize{1}}
		(B)--
		(C)-- node[pos=0.,anchor=east] {\scriptsize{#4}} %% node[pos=0.5,anchor=west] {#4}
		(A);
	}
}
\def\clem#1{#1}%\textcolor{red}{#1}}
\def\laurent#1{#1}%\textcolor{blue}{#1}}
\begin{document}
\title{Finite Volumes for the Stefan-Maxwell cross-diffusion system}
\author{Cl\'ement Canc\`es} 
\address{Cl\'ement  \textsc{Canc\`es} (\href{clement.cances@inria.fr}{\tt clement.cances@inria.fr})\\
Inria, Univ. Lille, CNRS, UMR 8524 - Laboratoire Paul Painlev\'e, F-59000 Lille.}
\author{Virginie Ehrlacher} 
\address{Virginie \textsc{Ehrlacher} (\href{virginie.ehrlacher@enpc.fr}{\tt virginie.ehrlacher@enpc.fr})\\
CERMICS, Ecole des Ponts ParisTech and Inria Paris, Université Paris-Est, 6-8 avenue Blaise Pascal, 77455, Marne-la-Vallée, France.
}
\author{Laurent Monasse}
\address{Laurent \textsc{Monasse} (\href{laurent.monasse@inria.fr}{\tt laurent.monasse@inria.fr})\\
Universit\'e C\^ote d’Azur, Inria, CNRS, Laboratoire J.A. Dieudonn\'e, Team Coffee, Parc Valrose, 06108 Nice cedex 02, France.
}
\begin{abstract}
The aim of this work is to propose a provably convergent finite volume scheme for the so-called Stefan-Maxwell model, which describes the evolution of the composition of a multi-component mixture and reads as a cross-diffusion system. The scheme proposed here 
relies on a two-point flux approximation, and preserves at the discrete level some fundamental theoretical properties of the continuous models, namely the non-negativity of the solutions, the conservation of mass and the preservation of the volume-filling constraints. 
In addition, the scheme satisfies a discrete entropy-entropy dissipation relation, very close to the relation which holds at the continuous level. In this article, we present this scheme together with its numerical analysis, and finally 
illustrate its behaviour with some numerical results. 
\end{abstract}
\maketitle

\section{The Stefan-Maxwell model}

The aim of this section is to present the so-called Stefan-Maxwell model, which is introduced in Section~\ref{sec:pres}. Its key mathematical properties are summarized in Section~\ref{sec:prop}. In particular, an 
entropy-entropy dissipation inequality holds for this system and is formally derived in Section~\ref{sec:entrop}. 

\subsection{Presentation of the model}\label{sec:pres}

The Maxwell-Stefan equations describe the evolution of the composition of a multicomponent mixture via diffusive transport~\cite{maxwell1867iv,stefan63ueber}. This model is used in various 
applications like sedimentation, dialysis, electrolysis,
ion exchange, ultrafiltration, and respiratory airways~\cite{wesselingh2000mass}.

\normalfont

We are interested in the evolution of the composition of a mixture of $n\in \mathbb{N}^*$ species, which is described by
the volume fractions $u = (u_1, \cdots, u_n)$, where $u_i$ denotes the volume fraction of the $i^{th}$ species for all $1\leq i \leq n$.
The spatial domain occupied by the mixture is represented by an open, connected, bounded, and polyhedral subset $\Omega$ of $\mathbb{R}^d$. Let $T>0$ denote some arbitrary final time. 

For all $1\leq i\neq j \leq n$, 
let $c_{ij} = c_{ji}>0$ be some positive real numbers. The coefficient $c_{ij}$ can be interpreted as the inverse of the inter-species diffusion coefficient between the 
$i^{th}$ and $j^{th}$ species. For all $v:=(v_1,\cdots,v_n) \in \mathbb{R}_+^n$, we denote by $A(v):=(A_{ij}(v))_{1\leq i,j \leq n}$ the matrix defined by
\begin{equation}\label{eq:defA}
A_{ii}(v):= \sum_{1\leq j \neq i \leq n} c_{ij}v_j, \quad A_{ij}(v):= - c_{ij} v_i.
\end{equation}

In the Stefan-Maxwell model, the evolution of the composition of the mixture is prescribed by the following system of partial differential equations:
\begin{equation}\label{eq:masstrans}
\partial_t u_i + {\rm div} J_i = 0, \quad \forall 1\leq i \leq n, 
\end{equation}
where the set of fluxes $J:=(J_i)_{1\leq i \leq n}$ is solution to the set of equations
\begin{align}\label{eq:SM1}
& \nabla u_i + \sum_{j=1}^n A_{ij}(u) J_j = 0, \quad \forall 1\leq i \leq n,\\ \label{eq:SM2}
& \sum_{i=1}^n J_i = 0.
\end{align}
For any vectors $v:=(v_i)_{1\leq i \leq n},w:=(w_i)_{1\leq i \leq n}\in \bR^n$, we denote by $\langle v, w\rangle:= \sum_{i=1}^n v_i w_i$ the canonical scalar product of $v,w$ in $\bR^n$, while the canonical scalar product of vectors $F,G \in \bR^d$ is denoted by $F\cdot G$.
Equations (\ref{eq:SM1}) and (\ref{eq:SM2}) can then be rewritten in the more compact form
\begin{align}\label{eq:SM1comp}
& \nabla u + A(u) J = 0, \\ \label{eq:SM2comp}
& \langle \un, J \rangle= 0,
\end{align}
where $\un:=(1,1,\cdots,1)\in \bR^n$. We refer the reader to Appendix~A of~\cite{jungel2013existence} and \cite{BGS15}
for the derivation of the model (\ref{eq:masstrans})-(\ref{eq:SM1})-(\ref{eq:SM2}).

The system is complemented with no-flux boundary conditions 
\begin{equation}\label{eq:no-flux}
J_i \cdot n = 0 \; \text{on}\; \partial \Omega, \quad \text{for all}\; 1\leq i \leq n,
\end{equation}
and a {measurable}\; initial condition 
$u^0 = (u^0_1, \cdots, u^0_n)$ which satisfies 
\begin{equation}\label{eq:init}
\forall 1\leq i \leq n , \quad u_i^0 \geq 0 \quad \mbox{ and } \quad \sum_{i=1}^n u_i^0= 1 \quad \mbox{ on } \partial \Omega. 
\end{equation}
In other words, denoting by
$$
\mathcal A:= \left\{ v \in \bR_+^n ,\quad \langle \un, v \rangle  = 1\right\}, 
$$
we assume that $u^0 \in L^\infty(\Omega; \mathcal A)$. 
Let us also assume in addition that 
\begin{equation}\label{eq:masspos.init}
\forall 1\leq i \leq n, \quad M_i:=\int_\Omega u_i^0 >0,
\end{equation}
i.e. that each of the different species is initially present in the mixture. We denote by $M = \left(M_i\right)_{1 \leq i \leq n} \in (\bR_+^*)$ 
the vector of masses. Since $u_0 \in  L^\infty(\Omega; \mathcal A)$, one has $\langle \un, M \rangle = m_\Omega$ where 
$m_\Omega$ stands for the Lebesgue measure of $\Omega$.

The mathematical analysis of the Stefan-Maxwell model is quite recent~\cite{giovangigli2012multicomponent,bothe2011maxwell,boudin2012mathematical,jungel2013existence}. 
The first existence result of global weak solutions to the Stefan-Maxwell problem for general initial data and number of chemical species was proved in~\cite{jungel2013existence}. 

Motivated by the results of~\cite{jungel2013existence}, we introduce here the notion of weak solution to the Stefan-Maxwell system of equations, which is used in our analysis. {In what follows, we denote by $Q_T = (0,T) \times \Omega$, and by 
$$
\cV_\lambda =  \left\{ v = (v_1, \dots, v_n) \; \middle| \; \sum_i v_i = \langle \un, v \rangle = \lambda \right\}, \qquad 
\lambda \in \bR.
$$
In particular, $\cA = \cV_1 \cap (\bR_+)^n$, $J \in (\cV_0)^d$, and $M \in \cV_{m_\Omega}$.
}

\begin{definition}\label{def:defsol}
 A weak solution $(u,J)$ to \eqref{eq:masstrans}-\eqref{eq:SM1comp}-\eqref{eq:SM2comp} 
 corresponding to the initial profile $u^0\in L^\infty(\Omega; \cA)$ is a pair $(u,J)$ 
 such that $u \in L^\infty(Q_T; \cA)\cap L^2((0,T); H^1(\Omega))^n$ and $\nabla \sqrt{u} \in L^2(Q_T)^{n\times d}$, 
 such that $J\in {L^2(Q_T;(\cV_0)^d)}$ satisfies~\eqref{eq:SM1comp-2}, 
 and such that, for all $\phi  \in \cC^\infty_c([0,T)\times \overline{\Omega})^n$,
\begin{equation}
 \iint_{Q_T} \langle u, \partial_t \phi\rangle + \int_\Omega \langle u^0 ,\phi(0,\cdot) \rangle 
 + \iint_{Q_T} \sum_{i=1}^n J_i \cdot \nabla \phi_i = 0.
 \label{eq:weak.u}
\end{equation}
\end{definition}

\subsection{Key mathematical properties of the model}\label{sec:prop}

In this section, we exhibit some key mathematical properties of the model, which were proved in~\cite{jungel2013existence}, and that we wish to preserve at the discrete level in the numerical scheme. 

First, the total mass of each specie is conserved, i.e, for all $1\leq i \leq n$ and $t>0$, 
\begin{equation}\label{eq:cont.mass}
\int_{\Omega} u_i(t,x)\,dx = \int_\Omega u_i^0(x)\,dx.
\end{equation}
This follows directly from the local conservation
property (\ref{eq:masstrans}) and the no-flux boundary conditions across $\partial \Omega$. 

Second, the {volume fractions} remain non-negative, i.e., 
\begin{equation}\label{eq:cont.pos}
\forall 1\leq i \leq n, \quad u_i(t,x) \geq 0, \mbox{ for almost all }(t,x)\in Q_T.
\end{equation}

Third, the condition \eqref{eq:SM2} together with \eqref{eq:masstrans} implies that 
$\partial_t \langle \un, u\rangle  = 0,$ 
so that condition~\eqref{eq:init} on the initial condition yields
\begin{equation}\label{eq:cont.sum1}
\sum_{i=1}^n u_i(t,x)=1 \quad \mbox{ for almost all }(t, x) \in Q_T.
\end{equation}
Therefore, $u\in L^\infty(Q_T; \mathcal A)$.

Lastly, an entropy-entropy dissipation relation, which is formally derived in Section~\ref{sec:entrop}, holds for this system, so that the functional 
$$
E: \left\{
\begin{array}{ccc}
 L^\infty(\Omega, \mathcal A) & \to & \bR\\
 u:=(u_1,\cdots,u_n) & \mapsto & \int_\Omega \sum_{i=1}^n u_i \log u_i\\
\end{array}
\right. 
$$
is a Lyapunov function for the Stefan-Maxwell system. More precisely, it holds that 
\begin{equation}\label{eq:entcont}
\frac{d}{dt}{E}(u(t))  + \frac{\alpha}{2}  \int_\Omega \sum_{i=1}^n | \nabla \sqrt{u_i}|^2 + \frac{c^*}{2}  \int_\Omega \sum_{i=1}^n  | J_i|^2 \leq 0, 
\end{equation}
for some positive constants $\alpha, c^*>0$ whose definitions are made precise in the next section.

\subsection{Continuous entropy estimate}\label{sec:entrop}

We formally derive here the entropy-entropy dissipation inequality (\ref{eq:entcont}) 
which holds for the continuous system and was rigorously proved in~\cite{jungel2013existence}. 
For the formal calculations to hold, 
we make the simplifying assumption in this Section that the solution $u$ to the Stefan-Maxwell model satisfies 
\begin{equation}\label{eq:ass}
\forall 1\leq i \leq n, \; u_i(t,x) >0 \quad \mbox{ and }\quad \sum_{i=1}^n u_i(t,x) = 1 \quad \mbox{ a.e. in } Q_T, 
\end{equation}
and that the solution enjoys enough regularity to justify the calculations.

To present the entropy-entropy dissipation inequality which holds for the Stefan-Maxwell model, 
we need to introduce some additional notation. 
Denote by $$c^* = \min_{1\leq i\neq j \leq n} c_{ij} >0,$$ then 
for all $1\leq i\neq j\leq n$, we define 
$$
\overline{c}_{ij}:= c_{ij} - c^* \quad \mbox{ and } \quad \overline{c}:= \max_{1\leq i \neq j \leq n} \overline{c}_{ij}.
$$
Let us point out that $\overline{c}_{ij} \geq 0$ for all $1\leq i\neq j \leq n$ (and thus $\overline{c}\geq 0$).

Let ${\rm I}$ denote the $n\times n$ identity matrix. For all $v\in \bR^n$, we introduce 
$\overline{A}(v):=(\overline{A}_{ij}(v))_{1\leq i,j \leq n}$ and $C(v):=(C_{ij}(v))_{1\leq i,j \leq n}$  the matrices respectively defined as follows: for all $1\leq i,j \leq n$,
\begin{equation}\label{eq:defAbar}
\overline{A}_{ii}(v):= \sum_{1\leq j \neq i \leq n}\overline{c}_{ij}v_j, \quad \overline{A}_{ij}(v):= - \overline{c}_{ij} v_i \quad \mbox{ and } \quad C_{ij}(v):= v_i.
\end{equation}
It then holds that for all $v:=(v_1,\cdots,v_n)\in (\bR_+)^n$,
\begin{equation}\label{eq:identity}
A(v) = c^* \langle \un,v \rangle {\rm I} \clem{-} c^* C(v) + \overline{A}(v),
\end{equation}
In particular, if $u\in \bR_+^n$ satisfies $\langle \un, u\rangle = 1$, then 
\begin{equation}\label{eq:newid}
A(u) = c^*{\rm I} \clem{-} c^* C(u) + \overline{A}(u).
\end{equation}
One easily deduces from particular form~\eqref{eq:defAbar} of the matrix $\overline A(v)$ that 
\begin{equation}\label{eq:prop_Abar}
{\rm Span}\{v\} \subset  {\rm Ker}(\overline{A}(v)), \qquad 
 {\rm Ran}(\overline{A}(v)) \subset \cV_0, \qquad \forall v \in \bR^n.
\end{equation}
It has been established in~\cite{jungel2013existence} that equalities instead of mere inclusions 
hold in~\eqref{eq:prop_Abar} if one replaces $\overline A(v)$ by $A(v)$ and one considers $v$ with positive components, i.e., 
\begin{equation}\label{eq:prop_A}
{\rm Span}\{v\} = {\rm Ker}({A}(v)), \qquad 
 {\rm Ran}({A}(v)) = \cV_0, \qquad \forall v \in \left(\bR_+^*\right)^n.
\end{equation}
This property is intensively used in the convergence study of \cite{jungel2013existence}.
Provided~\eqref{eq:ass} holds, \eqref{eq:prop_A} shows that  
there exists a unique solution $J(t,x)$ to (\ref{eq:SM1comp})-(\ref{eq:SM2comp}) for almost all $(t,x)\in (0,T)\times \Omega$,  
since $\nabla u \in (\cV_0)^d$.
Besides, using (\ref{eq:newid}), it holds that $J$ is a solution to 
(\ref{eq:SM1comp})-(\ref{eq:SM2comp}) if and only if it is the unique solution to 
\begin{align}\label{eq:SM1comp-2}
& \nabla u + c^* J + \overline{A}(u) J = 0, \quad \forall 1\leq i \leq n,\\ \label{eq:SM2comp-2}
& \langle \un, J \rangle= 0,
\end{align}
since $\langle \un, u \rangle =1$ and since the condition $\langle \un, J \rangle =0$ implies that $C(u)J = 0$.

For all $v:=(v_1,\cdots,v_n) \in (\bR_+^*)^n$, we denote by $M(v):={\rm diag}(v_1, \cdots, v_n)$ the $n\times n$ diagonal matrix whose $i^{th}$ diagonal entry is given by $v_i$ for all $1\leq i \leq n$. 
Then, the following lemma, which is central in our analysis, holds.
\begin{lemma}\label{lem:M}
 Let $v:=(v_1,\cdots,v_n) \in (\bR_+^*)^n$, such that for all $1\leq i \leq n$, $v_i\leq 1$. Then, it holds that $ \overline  B(v): =M^{-1}(v) A(v)$ is a symmetric semi-definite non-negative matrix such that
 \begin{equation}\label{eq:est}
 M^{-1}(v) \overline{A}(v) \leq 2\overline{c} M^{-1}(v),
 \end{equation}
in the sense of symmetric matrices. 
\end{lemma}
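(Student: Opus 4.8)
The plan is to reduce both assertions to a single computation of the quadratic form $\xi \mapsto \langle \xi, M^{-1}(v)G(v)\,\xi\rangle$ for a generic matrix $G(v)$ of the type \eqref{eq:defA}, namely $G_{ii}(v)=\sum_{j\neq i}\gamma_{ij}v_j$ and $G_{ij}(v)=-\gamma_{ij}v_i$ for $i\neq j$, with symmetric nonnegative coefficients $\gamma_{ij}=\gamma_{ji}\geq 0$. Taking $\gamma_{ij}=c_{ij}$ recovers $A(v)$, and taking $\gamma_{ij}=\overline c_{ij}$ recovers $\overline A(v)$, so a single lemma covers both claims. First I would record the symmetry of $M^{-1}(v)G(v)$: its off-diagonal $(i,j)$-entry is $\frac{1}{v_i}(-\gamma_{ij}v_i)=-\gamma_{ij}$, which is symmetric precisely because $\gamma_{ij}=\gamma_{ji}$. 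Applied with $\gamma_{ij}=c_{ij}$, this already yields the symmetry of $\overline B(v)=M^{-1}(v)A(v)$.

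The crux is the sum-of-squares identity, valid for every $\xi\in\bR^n$,
\[
\langle \xi, M^{-1}(v)\,G(v)\,\xi\rangle \;=\; \frac12\sum_{i\neq j}\gamma_{ij}\left(\sqrt{\frac{v_j}{v_i}}\,\xi_i-\sqrt{\frac{v_i}{v_j}}\,\xi_j\right)^2.
\]
To derive it I would expand $\langle\xi,M^{-1}(v)G(v)\,\xi\rangle=\sum_i\frac{\xi_i^2}{v_i}\sum_{j\neq i}\gamma_{ij}v_j-\sum_{i\neq j}\gamma_{ij}\xi_i\xi_j$, symmetrize the diagonal contribution by swapping the roles of $i$ and $j$ (legitimate since $\gamma_{ij}=\gamma_{ji}$), and complete the square. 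Since all $\gamma_{ij}\geq 0$, the right-hand side is nonnegative; choosing $\gamma_{ij}=c_{ij}$ therefore shows that $\overline B(v)$ is symmetric positive semi-definite, which is the first assertion.

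For the estimate \eqref{eq:est} I would apply the same identity with $\gamma_{ij}=\overline c_{ij}$, bound $\overline c_{ij}\leq\overline c$, and then apply the elementary inequality $(a-b)^2\leq 2a^2+2b^2$ to each square. After relabeling to merge the two symmetric double sums this gives
\[
\langle\xi, M^{-1}(v)\,\overline A(v)\,\xi\rangle \;\leq\; 2\,\overline c\sum_{i=1}^n\frac{\xi_i^2}{v_i}\sum_{j\neq i}v_j .
\]
It then remains to absorb the inner sum: using $\sum_{j\neq i}v_j=\langle\un,v\rangle-v_i\leq 1$ yields $\langle\xi, M^{-1}(v)\,\overline A(v)\,\xi\rangle\leq 2\,\overline c\,\langle\xi,M^{-1}(v)\,\xi\rangle$ for all $\xi\in\bR^n$, which is exactly the claimed inequality in the sense of symmetric matrices.

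The two delicate points are, first, the sum-of-squares rewriting: getting the symmetrization and the weights $\sqrt{v_j/v_i}$ right is what makes both the semi-definiteness and the comparison transparent at once. Second, and most importantly, the final step is where the constant $2\overline c$ is produced, and it hinges on the bound $\sum_{j\neq i}v_j\leq 1$. I would stress that it is this normalization of $v$ that tames the off-diagonal coupling: a termwise estimate relying only on $v_i\leq 1$ individually would leave a factor growing with the number of species, so the control of $\langle\un,v\rangle$ is the substantive hypothesis that must be invoked here.
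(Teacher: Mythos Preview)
Your sum-of-squares identity and the subsequent use of $(a-b)^2\le 2a^2+2b^2$ coincide exactly with the paper's argument. The divergence is only in the very last step: you invoke $\sum_{j\ne i}v_j=\langle\un,v\rangle-v_i\le 1$, but the lemma as stated assumes only $v_i\le 1$ for each $i$, not any control on $\langle\un,v\rangle$. With $v=(1,\dots,1)$ one has $\sum_{j\ne i}v_j=n-1$, so the inequality you use is simply not available from the hypothesis. The paper's route at this point is different: it bounds each ratio $v_j/v_i$ by $1/v_i$ and $v_i/v_j$ by $1/v_j$ termwise, which uses only the individual bounds $v_j\le 1$.

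That said, your closing remark --- that a termwise estimate relying only on $v_i\le 1$ would produce a factor growing with $n$ --- is correct, and a careful count in the paper's own final line yields $2(n-1)\overline c$ rather than $2\overline c$. So either the constant in \eqref{eq:est} should carry an $(n-1)$, or the hypothesis should be strengthened to $\langle\un,v\rangle\le 1$; the latter does hold in every application of the lemma in the paper (for $u\in\cA$ trivially, and for the edge values $u_\sigma^p$ because the logarithmic mean lies below the arithmetic mean). Your instinct about which hypothesis is doing the work is therefore sound, but as written you are appealing to an assumption that is not in the statement.
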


\begin{proof}
  Let $v:=(v_1,\cdots,v_n) \in (\bR_+^*)^n$ and $\overline B(v):= M^{-1}(v) {\overline A}(v)$. Denoting by $\left(\overline B_{ij}(v)\right)_{1\leq i,j \leq n}$ the different components of $\overline B(v)$, a direct calculation shows that for all $1\leq i,j \leq n$, 
  $$
  \overline B_{ij}(v):= - \overline{{c}}_{ij} \mbox{ if } i \neq j \quad \mbox{ and } \quad \overline B_{ii}(v)
  =\sum_{1\leq j \neq i \leq n} \overline{ c}_{ij} \frac{u_j}{u_i},
  $$
  hence the symmetry of the matrix $\overline B(v)$. 
 Let $\xi:=(\xi_i)_{1\leq i \leq n} \in \bR^n$. Using the fact that ${\overline c_{ij} = \overline c_{ji}}$ for all $1\leq i\neq j \leq n$, it holds that
 \begin{align*}
  \xi^T \overline B(v) \xi & = \sum_{1\leq j \neq i \leq n} \overline{c}_{ij} \left(\frac{v_j}{v_i} \xi_i^2  - \xi_i \xi_j \right) \\
  & = \frac12\sum_{1\leq j \neq i \leq n} \overline{c}_{ij} \left(\frac{v_j}{v_i} \xi_i^2 + \frac{v_i}{v_j} \xi_j^2   - 2\xi_i \xi_j \right) \\
  & = \frac12\sum_{1\leq j \neq i \leq n} \overline{c}_{ij} \left(\sqrt{\frac{v_j}{v_i}} \xi_i - \sqrt{\frac{v_i}{v_j}} \xi_j\right)^2  \geq 0.
 \end{align*}
Hence the non-negativity of the matrix $\overline B(v)$. Using now {the elementary inequality 
$(a-b)^2 \leq 2 a^2 + 2 b^2$ together with} the fact that $v_i \leq 1$ for all $1\leq i \leq n$, we obtain that
\begin{align*}
   \xi^T \overline B(v) \xi & = \frac12\sum_{1\leq j \neq i \leq n} \overline{c}_{ij} 
   	\left(\sqrt{\frac{v_j}{v_i}} \xi_i - \sqrt{\frac{v_i}{v_j}} \xi_j\right)^2 \\
   & \leq \sum_{1\leq j \neq i \leq n} \overline{c}_{ij} \left(\frac{v_j}{v_i} \xi_i^2 + \frac{v_i}{v_j} \xi_j^2\right) \\
   & \leq \overline{c} \sum_{1\leq j \neq i \leq n} \left(\frac{1}{v_i} \xi_i^2 + \frac{1}{v_j} \xi_j^2\right) 
    \leq 2 \overline{c} \xi^T M^{-1}(v) \xi.
\end{align*}
Hence the desired result.
\end{proof}

We are now in position to write the (formal) entropy-entropy dissipation inequality which holds on the continuous level for the Stefan-Maxwell model. For all $1\leq i \leq n$, 
let $w_i:= D_{u_i}\mathcal E(u):= \log u_i$ and $w:=(w_i)_{1\leq i \leq n}$. Then, it holds that $\nabla u = M(u) \nabla w$ which implies that 
\begin{equation}\label{eq:w_to_J}
\nabla w = - M^{-1}(u)A(u) J = - \left( c^* M^{-1}(u) + M^{-1}(u) \overline{A}(u) \right) J.
\end{equation}
Since $M^{-1}(u)$ is {symmetric definite positive while} 
$M^{-1}(u)\overline{A}(u)$ is  symmetric non-negative, it holds that 
${c}^* M^{-1}(u) + M^{-1}(u) \overline{A}(u)$ is an invertible matrix so that $J = - \left( c^* M^{-1}(u) + M^{-1}(u) \overline{A}(u) \right)^{-1}\nabla w$. 
This yields that
\begin{equation}\label{eq:dEdt}
\frac{d}{dt}{E}(u(t))  =  
 \int_\Omega \sum_{i=1}^n  \partial_t u_i w_i \overset{\eqref{eq:masstrans}}{=}  -  \int_\Omega \sum_{i=1}^n {\rm div} J_i\, w_i% \\
 \overset{\eqref{eq:no-flux}}{=} \int_\Omega  J \cdot \nabla w. 
 \end{equation}
Using~\eqref{eq:w_to_J}, the last term in the above equality can be rewritten of two different manners:
\begin{align}
\int_\Omega  J \cdot \nabla w = & \; - \int_\Omega  J \cdot \left( c^* M^{-1}(u) + M^{-1}(u) \overline{A}(u)\right) J 
\label{eq:J.w_1}
\\
    =& \;  - \int_\Omega  \nabla w \cdot \left( c^* M^{-1}(u) + M^{-1}(u) \overline{A}(u)\right)^{-1} \nabla w.
    \label{eq:J.w_2}
\end{align}
Define the matrix 
\begin{equation}\label{eq:B(v)}
B(v) := \left( c^* M^{-1}(v) + M^{-1}(v) \overline{A}(v)\right), \qquad \forall v = \left(v_i\right)_{1\leq i \leq n}\in (\bR_+^*)^n.
\end{equation}
It follows from Lemma~\ref{lem:M} that the two inequalities 
\begin{equation}\label{eq:B(v).ineq}
B(v) \geq c^* M^{-1}(v) \geq c^* {\rm I}, \qquad B(v)^{-1}\geq \frac{1}{c^* + 2 \overline{c}} M(v), \qquad \forall v \in (0,1]^n,
\end{equation}
hold in the sense of symmetric matrices. 
Therefore, we obtain from~\eqref{eq:J.w_1}--\eqref{eq:J.w_2} that 
\begin{equation}\label{eq:J.w_3}
\int_\Omega  J \cdot \nabla w \geq \frac1{2(c^* + 2 \overline c)} \int_\Omega  \nabla w \cdot M(u) \nabla w + 
\frac{c^\star}2 \int_\Omega |J|^2.
\end{equation}
The first term of the righthand side can be rewritten by noticing that 
$$
 \nabla w \cdot M(u) \nabla w =  \sum_{i = 1}^n u_i \nabla \log(u_i) \cdot \nabla \log(u_i) \\
 = 
 4 \sum_{i = 1}^n \left|\nabla \sqrt {u_i} \right|^2.
$$
As a consequence, we finally deduce from \eqref{eq:dEdt} and \eqref{eq:J.w_3} that 
$$
 \frac{d}{dt}{E}(u(t))  \leq  - \frac{1}{2}\alpha \int_\Omega \sum_{i=1}^n  | \nabla \sqrt{u_i}|^2- \frac{1}{2}c^*\int_\Omega  |J|^2, \\
$$
with 
$$
\alpha:= \frac{4}{c^* + 2 \overline{c}} >0.
$$
This entropy-entropy dissipation inequality is similar to (\ref{eq:entcont}). 

\begin{remark}\label{rmk:regularity}
Since the \laurent{entropy} $E$ is bounded on $L^\infty(Q_T;\cA)$ --- it takes its values in $[- m_\Omega \log(n),0]$ ---
integrating~\eqref{eq:entcont} over $t \in (0,T)$ yields 
$$
\iint_{Q_T} \left|\nabla \sqrt{u} \right|^2 + \iint_{Q_T} \left|J \right|^2 \leq C. 
$$
Moreover, since $u$ is uniformly bounded between $0$ and $1$, one has
$$ 
\iint_{Q_T} \left|\nabla \sqrt{u} \right|^2 \geq \frac14 \iint_{Q_T} \left|\nabla {u} \right|^2,
$$
so that one gets a control over the $L^2(0,T; H^1(\Omega))$ norm of $u$ and on the $L^2(Q_T)$ norm of $J$. 
This motivates the weak formulation used in Definition~\ref{def:defsol}. 
\end{remark}

\subsection{Contributions and positionning of the paper}

The goal of this paper is to build and analyze a numerical scheme preserving the properties discussed
in the previous section, namely:
\begin{itemize}
\item the non-negativity of the concentrations;
\item the conservation of mass;
\item the preservation of the volume filling constraint;
\item the entropy-entropy dissipation relation (\ref{eq:entcont}).
\end{itemize}
The scheme proposed here relies on two-point flux approximation (TPFA) finite volumes~\cite{eymard2000finite, eymard2014tp} 
and builds on similar ideas as the one introduced in~\cite{cances2020convergent} for another family of cross-diffusion systems. 

TPFA finite volumes is popular to approximate conservation laws. Unsurprisingly, schemes entering this family of methods 
have been proposed for the Stefan-Maxwell diffusion problem in~\cite{SPW03, boudin2012mathematical, PDBLGM11}. 
Those schemes yield satisfactory numerical outputs but there is no theoretical guarantee of their convergence. 
Besides, a finite element scheme is proposed and analysed in~\cite{JL19} for the more complex case where 
the chemical species are ions inducing a self-consistent electrical potential. The analysis carried out in~\cite{JL19} 
relies on the very strong assumption that integrals of non-polynomial functions can be computed exactly. 

Convergence proofs for finite volume approximations of cross-diffusion systems have been proposed 
in~\cite{ABRB11, Ahmed_intrusion, CFS_arXiv, cances2019finite, JZ19, cances2020convergent, DJZ_arXiv, murakawa2017linear, gerstenmayer2019comparison}. 
Most of the above contributions rely on the entropy-stability of the schemes, 
which is exploited thanks to the so-called discrete entropy method~\cite{CH14_FVCA7}. 
This approach is a transposition to the discrete setting of the boundedness-by-entropy method 
exposed in~\cite{jungel2015boundedness, Juengel16}. The design of entropy stable numerical schemes 
for diffusion type equations has received an important attention in the last years. Let us mention the 
contributions~\cite{BC12, BCJ14, BCV14, CG16_MCOM, CG_VAGNL, Cances_OGST, KSW18, SCS18, ABPP19, SCS19, SXY19, CCFG_HAL}, this list being non-exhaustive. 
We mention in particular the recent work~\cite{huo2020energy} where the authors propose an energy stable and positivity-preserving scheme for the Maxwell-Stefan diffusion system, but 
where no convergence analysis of the scheme is provided.  

Let us also mention that finite element methods are also used for the simulation of cross-diffusion systems. We refer the reader to~\cite{frittelli2017lumped, barrett2004finite,gurusamy2018finite} for more details. We would like to highlight in 
particular the very recent work~\cite{braukhoff2020entropy} where the authors propose a space-time Galerkin method which preserves the entropy structure of cross-diffusion systems\clem{, including the Stefan-Maxwell system under consideration}.

The scheme is presented in Section~\ref{sec:scheme}. 
Our main results are gathered in Section~\ref{sec:mainres}. 
Preliminary estimates and existence of a solution to the discretized scheme are proved in Section~\ref{sec:proofdiscrete}. 
Convergence of the discretized solution to a weak solution of the continuous model is proved in Section~\ref{sec:conv}. Finally, numerical tests illustrating the behaviour of the method are presented in Section~\ref{sec:num}.

\section{The finite-volume scheme}\label{sec:scheme}

\subsection{Discretization of $(0,T)\times \Omega$}

As already mentioned, our scheme relies on TPFA finite volumes.
As explained in~\cite{droniou2014finite,eymard2014tp,gartner2019we}, this approach appears to be very efficient as soon as the continuous problem to be solved numerically is isotropic and one has the freedom to choose a suitable mesh fulfilling the so-called 
orthogonality condition~\cite{herbin1995error,eymard2000finite}. We recall here the definition of such a mesh.

\begin{definition}\label{def:mesh}
 An admissible mesh of $\Omega$ is a triplet $(\cT, \cE, (x_K)_{K\in \cT})$ such that the following conditions are fulfilled.
 \begin{itemize}
  \item [(i)] Each control volume (or cell) $K \in \cT$ is non-empty, open, polyhedral and convex. We assume that
  $$
  K \cap L = \emptyset \mbox{ if } K,L \in \cT \mbox{ with } K \neq L, \quad \mbox{ while }\bigcup_{K\in \cT} \overline{K} = \overline{\Omega}. 
  $$
  \item[(ii)] Each face $\sigma \in \cE$ is closed and is contained in a hyperplane of $\bR^d$, with positive $(d-1)$-dimensional Hausdorff (or Lebesgue) measure denoted by $m_\sigma= \cH^{d-1}(\sigma)>0$. We assume that 
  $\cH^{d-1}(\sigma \cap \sigma') = 0$ for $\sigma ,\sigma'\in \cE$ unless $\sigma  = \sigma'$. For all $K\in \cT$, we assume that there exists a subset $\cE_K$ of $\cE$ such that $\partial K = \bigcup_{\sigma \in \cE_K} \sigma$. Moreover, we suppose that 
  $\bigcup_{K\in \cT} \cE_K = \cE$. Given two distinct control volumes $K,L\in \cT$, the intersection $\overline{K} \cap \overline{L}$ either reduces to a single face $\sigma \in \cE$ denoted by $K|L$, or its $(d-1)$-dimensional Hausdorff measure is $0$.
  \item[(iii)] The cell-centers $(x_K)_{K\in \cT}$ satisfy $x_K \in K$, and are such that, if $K,L \in \cT$ share a face $K|L$, then the vector $x_L - x_K$ is orthogonal to $K|L$.
 \end{itemize}
\end{definition}

We denote by $m_K$ the $d$-dimensional Lebesgue measure of the control volume $K$. 
The set of the faces is partitioned into two subsets: the set $\cE_{\rm int}$ of the interior faces defined by 
$$\cE_{\rm int} = \{ \sigma \in \cE \; | \; \sigma = K|L \mbox{ for some }K,L\in \cT\},$$ 
and the set $\cE_{\rm ext}=\cE \setminus \cE_{\rm int}$ of the exterior faces defined by 
$\cE_{\rm ext}=\{ \sigma \in \cE \; | \; \sigma \subset \partial \Omega \}$. 
For a given control volume $K\in \mathcal T$, we also define
$\cE_{K,{\rm int}} = \cE_K \cap \cE_{\rm int}$ (respectively $\cE_{K, {\rm ext}} = \cE_K \cap \cE_{\rm ext}$) 
the set of its faces that belong to $\cE_{\rm int}$ (respectively $\cE_{\rm ext}$). For such a face $\sigma \in \cE_{K,{\rm int}}$, we may write $\sigma = K|L$, meaning that $\sigma = \overline{K} \cap \overline{L}$, where $L\in \mathcal T$. 

Given $\sigma \in \cE$, we let 
$$
d_\sigma:= \left\{ 
\begin{array}{ll}
 |x_K - x_L| & \quad \mbox{ if } \sigma = K|L \in \cE_{\rm int},\\
 |x_K - x_\sigma| & \quad \mbox{ if } \sigma \in \cE_{K, {\rm ext}},\\
\end{array}
\right.
 \quad \mbox{ and } \quad \tau_\sigma = \frac{m_\sigma}{d_\sigma}.
$$
For internal edges $\sigma = K|L \in \cE_{\rm int}$, we also define 
$$
d_{K\sigma} = {\rm dist}(x_K, \sigma) \quad \text{and}\quad \tau_{K\sigma} = \frac{m_\sigma}{d_{K\sigma}}.
$$
Moreover, for all $K\in \cT$ and all $\sigma \in \cE_K$, we denote by
$$
n_{K\sigma}:= \left\{ 
\begin{array}{ll}
 \frac{x_L - x_K}{d_\sigma} & \quad \mbox{ if } \sigma = K|L \in \cE_{K, {\rm int}},\\
 \frac{x_\sigma - x_K}{d_\sigma} & \quad \mbox{ if } \sigma \in \cE_{K, {\rm ext}},\\
\end{array}
\right.
$$
the unitary normal to $\sigma$ outward with respect to $K$. The half-diamond cell $\Delta_{K\sigma}$ 
associated to $K$ and $\sigma$ is defined as the convex hull of $x_K$ and $\sigma$, and we define the diamond cells $\Delta_\sigma$
by 
$$
\Delta_{\sigma} = \begin{cases}
\Delta_{K\sigma} \cup \Delta_{L\sigma} & \text{if}\; \sigma = K|L \in \cE_{\rm int}, \\
\Delta_{K\sigma} &  \text{if}\; \sigma \in \cE_{K,\rm ext}.
\end{cases}
$$
Then it follows from the an elementary geometrical property that the ($d$- dimensional) Lebesgue measures 
of $\Delta_{\sigma}$ (resp. $\Delta_{K\sigma}$) are given by 
\begin{equation}\label{eq:mDelta_sig}
m_{\Delta_\sigma} = \frac{m_\sigma d_\sigma}d, \quad m_{\Delta_{K\sigma}} = \frac{m_\sigma d_{K\sigma}}d,
\end{equation}

We finally introduce the size $h_\cT$ and the regularity $\zeta_\cT$ (which is assumed to be positive) of a discretization $(\cT, \cE, (x_K)_{K\in\cT})$ of $\Omega$ by setting
$$
h_\cT = \mathop{\max}_{K \in \cT} {\rm diam}(K) \quad \mbox{ and } \zeta_\cT = \mathop{\min}_{K \in \cT} \mathop{\min}_{\sigma \in \cE_K} \frac{d(x_K, \sigma)}{d_\sigma}.
$$
Concerning the time discretization of $(0,T)$, we consider $P_T\in \bN^*$ and an increasing infinite family of times $0<t_0 < t_1 < \cdots < t_{P_T} = T$. We denote by $\Delta t_p=t_p - t_{p-1}$ for $p\in \{1, \cdots, P_T\}$, 
by $\boldsymbol{\Delta t}=(\Delta t_p)_{1\leq p \leq P_T}$, and by $h_T=\max_{1\leq p \leq P_T}\Delta t_p$. In what follows, we will use boldface notation for mesh-indexed families, typically for elements of $\bR^\cT$, $\bR^\cE$, $(\bR^\cT)^n$, $(\bR^\cE)^n$, 
$(\bR^\cT)^{P_T}$, $(\bR^\cE)^{P_T}$ or even $(\bR^\cT)^{n\times P_T}$ and $(\bR^\cE)^{n\times P_T}$. 
One naturally defines discrete $L^2$ scalar products on $\bR^\cT$ and $\bR^{d\times\cE}$ by setting
$$
\langle \bbu, \bbv \rangle_\cT = \sum_{K\in\cT} m_K u_K v_K, \qquad  
\bbu = \left(u_K\right)_{K\in\cT}, \bbv = \left(v_K\right)_{K\in\cT} \in \bR^\cT
$$
and 
$$
\langle \bbF, \bbG \rangle_\cE = \sum_{\sigma\in\cE} m_{\Delta_{\sigma}} F_{K\sigma} \cdot G_{K\sigma}, \qquad  
\bbF = \left(F_{K\sigma}\right)_{\sigma\in\cE}, \bbG = \left(G_{K\sigma}\right)_{\sigma\in\cE} \in \bR^{d\times \cE}.
$$

\subsection{Numerical scheme}

The initial data $u^0 \in L^\infty(\Omega; \mathcal A)$ is discretized into
$$
\boldsymbol{u}^0 = \left( \boldsymbol{u}_i^0\right)_{1\leq i \leq n} \in (\bR^\cT)^n = \left( u^0_{i,K}\right)_{K\in \cT, 1\leq i \leq n},
$$
by setting 
\begin{equation}\label{eq:definit}
u^0_{i,K} = \frac{1}{m_K}\int_K u_i^0(x)\,dx, \quad \forall K\in \cT, 1\leq i \leq n. 
\end{equation}
Assume that $\boldsymbol{u}^{p-1} = \left( u_{i,K}^{p-1}\right)_{K\in \cT, 1\leq i \leq n}$ is given for some $p\geq 1$, then we have to define how to compute the discrete volume fractions $\boldsymbol{u}^{p} = \left( u_{i,K}^{p}\right)_{K\in \cT, 1\leq i \leq n}$ and the 
discrete fluxes $\bbJ^p = \left(J_{i,K\sigma}^p \right)_{\sigma \in \cE, 1 \leq i \leq n}$.

First, we introduce some notation. Given any discrete scalar field $\boldsymbol{v} = (v_K)_{K\in \cT} \in \bR^\cT$, we define for all cell $K\in \cT$ and interface $\sigma \in \cE_K$ the mirror value $v_{K\sigma}$ of $v_K$ across $\sigma$ by setting:
$$
v_{K\sigma} = \left\{
\begin{array}{ll}
 v_L &  \mbox{ if } \sigma = K|L \in \cE_{\rm int},\\
 v_K & \mbox{ if } \sigma \in \cE_{\rm ext}.\\
\end{array}
\right.
$$
We also define the oriented and absolute jumps of $\boldsymbol{v}$ across any edge by
$$
D_{K\sigma} \boldsymbol{v} = v_{K\sigma} - v_K, \quad 
\mbox{ and } \quad D_\sigma \boldsymbol{v} = 
| D_{K\sigma} \boldsymbol{v} |, \quad \forall K\in \cT, \; \forall \sigma \in \cE_K.
$$
Note that in the above definition, for all $\sigma \in \cE$, the definition of 
$D_\sigma \bbv$ does not depend on the choice of the element $K\in \cT$ such that $\sigma \in \cE_K$. 

\medskip

For all $1\leq i \leq n$, we also introduce some edge values $u^p_{i,\sigma}$ of the volume fraction $u_i$ for all $\sigma \in \cE$. For any $K\in \cT$ such that $\sigma \in \cE_K$, the definition of $u^p_{i,\sigma}$ makes use of the values
$u^p_{i,K}$ and $u^p_{i,K\sigma}$ but is independent of the choice of $K$. 
As in~\cite{cances2020convergent}, the edge volume fractions $u^p_{i,\sigma}$ is defined through a logarithmic mean as follows
\begin{subequations}\label{eq:scheme}
\begin{equation}\label{eq:u_isig}
u^p_{i,\sigma} = \left\{
\begin{array}{ll}
 0 & \mbox{ if } \min(u_{i,K}^p, u_{i, K\sigma}^p)\leq 0, \\
 u_{i,K}^p & \mbox{ if } 0\leq u_{i,K}^p =u_{i, K\sigma}^p, \\
 \frac{u_{i,K}^p - u_{i,K\sigma}^p}{\log(u_{i,K}^p) - \log(u_{i,K\sigma}^p)} & \mbox{ otherwise}.\\
\end{array}
\right.
\end{equation}
We also denote by $u^p_\sigma:=\left( u^p_{i,\sigma}\right)_{1\leq i \leq n}$. This choice for the edge concentration is crucial for the preservation at the discrete level of a discrete entropy-entropy dissipation inequality similar to 
(\ref{eq:entcont}) on the continuous level. 

\medskip

The conservation laws are discretized in
a conservative way with a time discretization relying on the backward Euler scheme:
\begin{equation}\label{eq:cons}
m_K \frac{u_{i,K}^p - u_{i,K}^{p-1}}{\Delta t_p} + \sum_{\sigma \in \cE_K} m_\sigma J_{i, K\sigma}^p = 0, \quad \forall K\in \cT, \; \forall 1\leq i \leq n. 
\end{equation}
The relation between the fluxes and the variations of the volume fractions across the edges 
relies on formula~\eqref{eq:SM1comp-2} rather that on~\eqref{eq:SM1comp}. This trick takes its inspiration 
in \cite{cances2020convergent}, and appears to be crucial in what follows for the derivation of the 
discrete counterpart of the entropy-entropy dissipation estimate~\eqref{eq:entcont}. 
More precisely, the discrete fluxes $J_{K\sigma}^p:=\left(J_{i, K\sigma}^p\right)_{1\leq i \leq n}$ are solution to the following set of equations: for all $K\in \cT$ and $\sigma \in \cE_{K, {\rm int}}$, 
$$
 \frac{1}{d_\sigma} D_{K\sigma} \boldsymbol{u}_i^p + c^* J_{i,K\sigma}^p + \sum_{1\leq j \leq n} \overline{A}_{ij}(u_\sigma^p) J_{j,K\sigma}^p  = 0, \quad \forall 1\leq i \leq n,
 $$
which rewrites in a more compact form as
\begin{equation}\label{eq:SM1comp-2-disc}
 \frac{1}{d_\sigma} D_{K\sigma} \boldsymbol{u}^p + c^* J_{K\sigma}^p + \overline{A}(u_\sigma^p) J_{K\sigma}^p  = 0.
\end{equation}
One readily checks that Formula~\eqref{eq:SM1comp-2-disc} yields conservative fluxes, i.e.,
\begin{equation}\label{eq:fluxcond1}
J_{K\sigma}^p +J_{L\sigma}^p = 0, \quad \forall \sigma = K|L \in \cE_{\rm int}, \; 1 \leq p \leq P_T. 
\end{equation}
The discrete counterpart to the no-flux boundary condition~\eqref{eq:no-flux} is naturally 
\begin{equation}\label{eq:fluxcond2}
J_{K\sigma}^p = 0, \quad \forall \sigma \in \cE_{K, \rm ext}, \; K \in \cT,  \; 1 \leq p \leq P_T. 
\end{equation}
\end{subequations}

\begin{remark}\label{rmk:flux_sum_0}
We stress on the fact here that we do not impose the constraint $J_{K\sigma}^p \in \cV_0$ for all $K \in \cT, \sigma \in \cE_K$, and 
$1 \leq p \leq P_T$.
Indeed, (\ref{eq:SM1comp-2-disc}) can be rewritten equivalently as 
$$
 \frac{1}{d_\sigma} D_{K\sigma} \boldsymbol{u}^p + \left( c^*I + \overline{A}(u_\sigma^p)\right) J_{K\sigma}^p  = 0,
$$
and the matrix $c^*I + \overline{A}(u_\sigma^p)$ differs in general from $A(u_\sigma^p)$ since $u_\sigma^p$ does not belong 
to $\cV_1$ in general. 
As a consequence, ${\rm Ker}\left( c^* I +  \overline{A}(u_\sigma^p)\right)$ may not be of dimension $1$. 
Actually, we will see in Lemma~\ref{lem:first} and Lemma~\ref{lem:AT} that for any 
$\bbu^{p-1} \in \cA^\cT$, then any solution $\bbu^p$ to the scheme presented above belongs to $\cA^\cT$ and that there exists 
a unique set of fluxes $\left(J_{K\sigma}^p \right)_{K\in \cT, \sigma\in \cE_K}$ satisfying (\ref{eq:SM1comp-2-disc})-(\ref{eq:fluxcond1})-(\ref{eq:fluxcond2}), and that $J_{K\sigma}^p$ necessarily belongs to $\cV_0$.

\end{remark}

\subsection{Main results and organisation}\label{sec:mainres}

We gather the main results of our paper in this section. 
Our first theorem concerns the existence of \laurent{a} discrete solution for a given mesh, 
and the preservation of the structural properties listed in Section~\ref{sec:prop}.

\laurent{In order to obtain} a discrete counterpart of the entropy-entropy dissipation inequality~\eqref{eq:entcont}, 
we need to introduce the discrete entropy functional $E_\cT: (\bR_+^\cT)^n \to \bR$, which is defined by
\begin{equation}\label{eq:discreteent}
E_\cT(\bbv) = \sum_{i=1}^n \sum_{K\in\cT} m_K v_{i,K}\log(v_{i,K}), \quad \forall \bbv = (\bbv_i)_{1\leq i \leq n} \in (\bR_+^\cT)^n.
\end{equation}
Note that the functional $E_\cT$ is uniformly bounded on the set 
$$
\cA^\cT = \left\{ \bbv \in  (\bR_+^\cT)^n\; \middle|\; \left(v_{i,K}\right)_{1 \leq i \leq n} \in \cA \text{ for all $K\in\cT$} \right\}. 
$$
More precisely, there holds
\begin{equation}\label{eq:E.bound}
- m_\Omega \log(n) \leq  E_\cT(\bbv) \leq 0, \qquad \forall \bbv \in \cA^\cT. 
\end{equation}

Denote by $\boldsymbol{1}_\cT = (1,\dots, 1) \in \bR^\cT$, 
then the following theorem holds:
\begin{theorem}\label{th:discrete}
 Let $(\cT,\cE, (x_K)_{K\in\cT})$ be an admissible mesh and let
 $\bbu^0$ be defined by (\ref{eq:definit}) from an initial condition
 $u^0\in L^\infty(\Omega; \cA)$ \laurent{satisfying} the nondegeneracy assumption~\eqref{eq:masspos.init}.
 Then, for all $1\leq p \leq P_T$, the nonlinear system of equations (\ref{eq:scheme}) has (at least) a (strictly) 
 positive solution $\bbu^p \in \cA^\cT$. This solution $\bbu^p$ satisfies $\langle \bbu^p, \1_\cT \rangle_\cT = M$ and the corresponding fluxes 
 $\bbJ^p = \left(J_{K\sigma}^p\right)_{\sigma \in \cE}$ 
 are uniquely determined by \eqref{eq:SM1comp-2-disc}-\eqref{eq:fluxcond1}-\eqref{eq:fluxcond2} and belong to 
 $ (\cV_0)^\cE$, i.e. $\sum_{i=1}^n J_{i,K\sigma}^p = 0$ for all $\sigma \in \cE$. 
 Moreover, the following entropy-entropy dissipation estimate holds: 
 \begin{equation}\label{eq:discrete.entro}
 E_\cT (\bbu^p) +\laurent{\Delta t_p}\sum_{\sigma = K|L \in \cE_{\rm int}} \left( \frac{c^*}2 m_\sigma d_\sigma |J_{K\sigma}^p|^2 
+ \frac{\alpha}2 \tau_\sigma \left| D_{K\sigma} \sqrt{\bbu^p} \right|^2\right)\\
 \leq  E_\cT (\bbu^{p-1}).
 \end{equation}
\end{theorem}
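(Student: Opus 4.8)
The plan is to argue by induction on $p$. The base case is immediate: by \eqref{eq:definit} the family $\bbu^0$ is the cellwise average of an $\cA$-valued map, hence $\bbu^0\in\cA^\cT$, and $\langle \bbu^0,\1_\cT\rangle_\cT = M$ by \eqref{eq:masspos.init}. Assuming $\bbu^{p-1}\in\cA^\cT$ with $\langle \bbu^{p-1},\1_\cT\rangle_\cT = M$, I must produce a strictly positive $\bbu^p\in\cA^\cT$ solving \eqref{eq:scheme}, verify that it conserves mass and that the associated fluxes lie in $(\cV_0)^\cE$, and finally establish \eqref{eq:discrete.entro}. I would first treat the structural identities and the entropy estimate as a priori properties of any (strictly positive) solution, and only then close the existence argument by a topological degree method in which these a priori bounds are precisely what is needed.

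Mass conservation and the constraints come from summation. Summing \eqref{eq:cons} over $K\in\cT$ at fixed $i$, the flux contributions cancel by the conservativity \eqref{eq:fluxcond1} on interior faces and vanish by \eqref{eq:fluxcond2} on boundary faces, giving $\sum_K m_K u_{i,K}^p = \sum_K m_K u_{i,K}^{p-1} = M_i$. For the volume-filling constraint and $J_{K\sigma}^p\in\cV_0$, the key observation is that $\un^T\overline A(v)=0$ for every $v\in\bR^n$ (the columns of $\overline A$ sum to zero, in accordance with ${\rm Ran}(\overline A(v))\subset\cV_0$). Summing \eqref{eq:SM1comp-2-disc} over $i$ then yields $\tfrac1{d_\sigma} D_{K\sigma}(\sum_i \bbu_i^p) + c^* \sum_i J_{i,K\sigma}^p = 0$, so that the scalar field $\sum_i\bbu_i^p$ solves a linear TPFA heat-type problem (after summing \eqref{eq:cons} over $i$) with constant datum $\sum_i\bbu_i^{p-1}\equiv 1$; uniqueness for this discrete elliptic problem forces $\sum_i u_{i,K}^p = 1$ for all $K$, whence $\sum_i J_{i,K\sigma}^p = 0$. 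This is the content of Lemma~\ref{lem:first} and Lemma~\ref{lem:AT}, which I would invoke.

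The entropy estimate is the core, and mirrors the formal continuous computation of Section~\ref{sec:entrop} step by step. By convexity of $x\mapsto x\log x$ and mass conservation (which kills the resulting affine part), $E_\cT(\bbu^p) - E_\cT(\bbu^{p-1}) \leq \sum_{i,K} m_K \log(u_{i,K}^p)\,(u_{i,K}^p - u_{i,K}^{p-1})$. Substituting \eqref{eq:cons} and performing a discrete integration by parts (using \eqref{eq:fluxcond1}--\eqref{eq:fluxcond2}) turns the right-hand side into $\Delta t_p\sum_{\sigma=K|L} m_\sigma \langle J_{K\sigma}^p, D_{K\sigma}\log\bbu^p\rangle$. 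The logarithmic-mean choice \eqref{eq:u_isig} is designed precisely so that $D_{K\sigma}\bbu_i^p = u_{i,\sigma}^p\, D_{K\sigma}\log\bbu_i^p$, i.e. $D_{K\sigma}\log\bbu^p = M^{-1}(u_\sigma^p) D_{K\sigma}\bbu^p = -d_\sigma B(u_\sigma^p) J_{K\sigma}^p$ by \eqref{eq:SM1comp-2-disc} and \eqref{eq:B(v)}. Since $u_\sigma^p\in(0,1]^n$ for a strictly positive solution, the matrix bounds \eqref{eq:B(v).ineq} from Lemma~\ref{lem:M} apply. Splitting $\langle J, BJ\rangle = \tfrac12\langle J,BJ\rangle + \tfrac12\langle J,BJ\rangle$, I would bound the first half via $B(u_\sigma^p)\geq c^*{\rm I}$ to extract $\tfrac{c^*}2|J_{K\sigma}^p|^2$, and the second via $B(u_\sigma^p)^{-1}\geq \tfrac1{c^*+2\overline c} M(u_\sigma^p)$ together with the elementary discrete chain-rule inequality $u_{i,\sigma}^p (D_{K\sigma}\log\bbu_i^p)^2 = D_{K\sigma}\bbu_i^p\, D_{K\sigma}\log\bbu_i^p \geq 4\,(D_{K\sigma}\sqrt{\bbu_i^p})^2$, which reduces to the log-mean $\leq$ arithmetic-mean inequality applied to $\sqrt{u_{i,K}^p},\sqrt{u_{i,K\sigma}^p}$. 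After multiplying by $m_\sigma d_\sigma$ and recalling $\tau_\sigma = m_\sigma/d_\sigma$ and $\alpha = 4/(c^*+2\overline c)$, this produces exactly \eqref{eq:discrete.entro}.

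Existence and strict positivity is where the real work lies, and I expect it to be the main obstacle. The fluxes can be eliminated through Lemma~\ref{lem:AT}, since $c^*{\rm I} + \overline A(u_\sigma^p) = M(u_\sigma^p)B(u_\sigma^p)$ is invertible for positive $u_\sigma^p$, reducing \eqref{eq:scheme} to a nonlinear system for $\bbu^p$ on the affine slice $\{\sum_i u_{i,K}^p = 1\}$. I would set up a topological degree argument: introduce a homotopy $\lambda\in[0,1]$ linking the full scheme to a decoupled linear reference problem possessing an explicit strictly positive solution of nonzero degree, and use the entropy estimate — valid uniformly along the homotopy — for the a priori bounds. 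The delicate point is that the map is well defined only where all components are positive (otherwise $M^{-1}(u_\sigma^p)$ and the logarithmic mean \eqref{eq:u_isig} degenerate); the confinement away from $\{\min_i u_{i,K}=0\}$ must therefore be extracted from the entropy, whose derivative blows up logarithmically at the boundary, together with the conservation of the strictly positive masses $M_i$ guaranteed by \eqref{eq:masspos.init}, so that every solution along the homotopy stays in a fixed compact subset of the interior of $\cA^\cT$. Homotopy invariance of the degree then yields the desired strictly positive $\bbu^p$, and the induction closes.
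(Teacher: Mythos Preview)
Your overall architecture matches the paper's: the summation identities for mass conservation, the volume-filling constraint, and $J_{K\sigma}^p\in\cV_0$ are exactly Lemmas~\ref{lem:first} and~\ref{lem:AT}; the entropy computation via convexity, the log-mean identity $D_{K\sigma}\bbu^p=M(u_\sigma^p)D_{K\sigma}\log\bbu^p$, and the matrix bounds~\eqref{eq:B(v).ineq} reproduces Lemma~\ref{lem:entropy}; and existence via a topological-degree homotopy to $n$ decoupled heat equations is precisely the paper's existence proposition.

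The genuine gap is strict positivity. You propose to obtain it from the fact that the entropy's ``derivative blows up logarithmically at the boundary'', but this does not give confinement: $E_\cT$ is uniformly bounded on $\cA^\cT$ by~\eqref{eq:E.bound}, so the entropy inequality~\eqref{eq:discrete.entro} says nothing about how close $u_{i,K}^p$ may get to zero. Worse, the entropy computation tests with $\log u_{i,K}^p$, which already presupposes positivity, so the argument is circular; and in the degree step the only a~priori bound you have on the fluxes is the $L^2$ bound coming from that same entropy inequality. The paper instead proves positivity by a direct discrete minimum-principle argument (Lemma~\ref{lem:first}) that is logically prior to everything else and crucially exploits the extension in~\eqref{eq:u_isig}: if $u_{i,K}^p\le 0$ at a minimal cell $K$, then $u_{i,\sigma}^p=0$ for every $\sigma\in\cE_K$, the off-diagonal entries of the $i$-th row of $\overline A(u_\sigma^p)$ vanish, and the flux relation collapses to a scalar equation giving $J_{i,K\sigma}^p\le 0$; combined with~\eqref{eq:cons} one gets equality, the minimum propagates to all cells by connectedness, and this contradicts $\langle\bbu_i^p,\1_\cT\rangle_\cT=M_i>0$. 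Incidentally, your worry that the scheme is well defined only for strictly positive data is unfounded: the extension~\eqref{eq:u_isig} keeps $u_\sigma^p\in\bR_+^n$ in all cases, and $c^* I+\overline A(v)$ is strictly column-diagonally dominant (hence invertible) for every $v\in\bR_+^n$, so the flux map is globally defined and continuous on a neighborhood of $\cA^\cT$---which is exactly what the degree argument needs, with no separate interior-confinement device.
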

The proof of Theorem~\ref{th:discrete} will be the purpose of Section~\ref{sec:proofdiscrete}.

\medskip

From an iterated discrete solution $(\bbu, \bbJ) = (\bbu^p, \bbJ^p)_{1\leq p \leq P_T}$ to the scheme (\ref{eq:scheme}), 
we define for all $1\leq i \leq n$, the piecewise constant approximate volume fractions 
$u_{i,\cT, \boldsymbol{\Delta t}}: Q_T \to (0,1)$ defined almost everywhere by
\begin{equation}\label{eq:approx.u}
u_{i,\cT, \boldsymbol{\Delta t}}(t,x) = u_{i,K}^n \quad \mbox{ if }(t,x)\in (t_{p-1}, t_p]\times K. 
\end{equation}
Since $\bbu^p \in \cA^\cT$, then $u_{\cT, \boldsymbol{\Delta t}} = \left(u_{i,\cT, \boldsymbol{\Delta t}}\right)_{1\leq i \leq n}$ 
belongs to $L^\infty(Q_T;\cA)$.
We also define approximate fluxes 
$J_{\cE, \boldsymbol{\Delta t}} = \left(J_{i,\cE, \boldsymbol{\Delta t}}\right)_{1\leq i \leq n}: Q_T \to (\cV_0)^d$ 
from the discrete fluxes $\bbJ^p$ by setting 
\begin{equation}\label{eq:approx.J}
J_{\cE, \boldsymbol{\Delta t}}(t,x) = d  \, J_{K\sigma}^pn_{K\sigma} \quad 
\text{if}\; (t,x) \in (t_{p-1}, t_p] \times \Delta_\sigma.
\end{equation}

\medskip
We are now in position to present our second main result, which concerns the convergence of the scheme 
as the discretisation parameters tend to $0$. 
In what follows, 
let $(\cT_m, \cE_m, (x_K)_{K\in \cT_m})_{m\geq 1}$ and $(\boldsymbol{\Delta t}_m)_{m\geq 1}$ be sequences of admissible 
discretisations of $\Omega$ and $(0,T)$ respectively. We assume that 
\begin{equation}\label{eq:h_to_0}
h_{\cT_m} \underset{m\to\infty} \longrightarrow 0, \quad h_{T_m} \underset{m\to\infty} \longrightarrow 0, 
\quad \text{while}\quad \liminf_{m \geq 1} \zeta_{\cT_m} = \zeta^* >0.
\end{equation} 
Then, the following theorem holds: 
\begin{theorem}\label{th:convergence}
Let $(\cT_m, \cE_m, (x_K)_{K\in \cT_m})_{m\geq 1}$ and $(\boldsymbol{\Delta t}_m)_{m\geq 1}$ be sequences of admissible 
discretisations of $\Omega$ and $(0,T)$ respectively fulfilling condition~\eqref{eq:h_to_0}. Let 
$\left(\bbu_m, \bbJ_m\right)_{m}= \left( \left(\bbu^p, \bbJ^p\right)_{1 \leq p \leq P_{T,m}}\right)_{m \geq 1}$ be a corresponding 
sequence of discrete solutions to \eqref{eq:scheme}, from which a sequence of approximate solutions 
$\left(u_{\cT_m, \boldsymbol{\Delta t}_m}, J_{\cE_m, \boldsymbol{\Delta t}_m}\right)_{m\geq 1}$ is 
reconstructed thanks to~\eqref{eq:approx.u}--\eqref{eq:approx.J}.
Then there exists a weak solution $(u,J)$ to \eqref{eq:masstrans}-\eqref{eq:SM1comp}-\eqref{eq:SM2comp} 
in the sense of Definition~\ref{def:defsol} such that, up to a upsequence,
$$
u_{\cT_m, \boldsymbol{\Delta t}_m} \mathop{\longrightarrow}_{m\to +\infty} u \mbox{ a.e. in } Q_T,
$$
and 
$$
J_{\cE_m, \boldsymbol{\Delta t}_m} \mathop{\rightharpoonup}_{m\to +\infty} J \mbox{ weakly in }L^2((0,T)\times \Omega)^{d \times n}.
$$
\end{theorem}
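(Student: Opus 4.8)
The plan is to follow the now-classical roadmap for entropy-stable finite volume schemes: extract uniform a priori estimates from the discrete entropy dissipation inequality, deduce strong compactness for the volume fractions and weak compactness for the fluxes, and then pass to the limit in both the discrete mass balance and the discrete flux--gradient relation. Only strong convergence of $u$ and weak convergence of $J$ will be needed.

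First I would derive the uniform estimates. Summing the discrete entropy--entropy dissipation inequality~\eqref{eq:discrete.entro} over $p = 1, \dots, P_{T,m}$ and using the uniform bound~\eqref{eq:E.bound} on $E_\cT$, the telescoping left-hand side collapses and yields, independently of $m$,
\begin{equation*}
\sum_{p} \Delta t_p \sum_{\sigma = K|L} \left( c^* m_\sigma d_\sigma |J_{K\sigma}^p|^2 + \alpha\, \tau_\sigma \big| D_{K\sigma}\sqrt{\bbu^p}\big|^2 \right) \leq 2 m_\Omega \log(n).
\end{equation*}
Via the diamond-cell reconstructions~\eqref{eq:approx.u}--\eqref{eq:approx.J} and the identity $m_\sigma d_\sigma = d\, m_{\Delta_\sigma}$, this controls the $L^2(Q_T)$ norm of the reconstructed fluxes $J_{\cE_m,\boldsymbol{\Delta t}_m}$ and the discrete $L^2((0,T);H^1(\Omega))$ seminorm of $\sqrt{u_{i,\cT_m,\boldsymbol{\Delta t}_m}}$. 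Since $0\le u_i\le 1$, the elementary bound $|D_{K\sigma}\bbu_i^p| \le 2\,|D_{K\sigma}\sqrt{\bbu_i^p}|$ transfers this into a uniform discrete $H^1$ bound on $u_{i,\cT_m,\boldsymbol{\Delta t}_m}$ itself. The uniform regularity $\zeta^*>0$ from~\eqref{eq:h_to_0} is what makes these discrete seminorms comparable to genuine norms. From the discrete $H^1$ bound (spatial compactness) together with a time-translate estimate obtained from the scheme~\eqref{eq:cons} and the flux bound, a discrete Aubin--Lions / Kolmogorov argument gives, up to a subsequence, $u_{\cT_m,\boldsymbol{\Delta t}_m}\to u$ strongly in $L^2(Q_T)$ and a.e., while the flux bound gives $J_{\cE_m,\boldsymbol{\Delta t}_m}\rightharpoonup J$ weakly in $L^2(Q_T)^{d\times n}$. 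The pointwise constraints $u_{i,\cT_m,\boldsymbol{\Delta t}_m}\in[0,1]$ and $\sum_i u_{i,\cT_m,\boldsymbol{\Delta t}_m}=1$ pass to the a.e.\ limit, so $u\in L^\infty(Q_T;\cA)$; the $H^1$ bound yields $u\in L^2((0,T);H^1(\Omega))^n$ and $\nabla\sqrt u\in L^2(Q_T)^{n\times d}$ by the standard identification of the weak limit of reconstructed discrete gradients with the weak gradient; and since each $J_{K\sigma}^p\in\cV_0$ by Theorem~\ref{th:discrete} and $\cV_0$ is closed, $J\in L^2(Q_T;(\cV_0)^d)$.

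Next I would pass to the limit in the mass balance. Multiplying~\eqref{eq:cons} by $\phi_i(t_{p-1},x_K)$ for a test function $\phi\in\cC^\infty_c([0,T)\times\overline\Omega)^n$, summing over $i,K,p$, and performing Abel summation in time and discrete integration by parts in space (using conservativity~\eqref{eq:fluxcond1} and the no-flux condition~\eqref{eq:fluxcond2}), the discrete time-derivative term converges to $\iint_{Q_T}\langle u,\partial_t\phi\rangle + \int_\Omega\langle u^0,\phi(0,\cdot)\rangle$ by strong convergence of $u$ and consistency of the initial reconstruction~\eqref{eq:definit}, while the flux term rearranges into a sum against $D_{K\sigma}\phi_i$ and converges to $\iint_{Q_T}\sum_i J_i\cdot\nabla\phi_i$ by weak convergence of the fluxes tested against the strongly convergent, consistent reconstructed gradient of the smooth $\phi$. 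This recovers~\eqref{eq:weak.u}.

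The main obstacle is the passage to the limit in the flux--gradient relation~\eqref{eq:SM1comp-2-disc}, which must produce~\eqref{eq:SM1comp-2}, because of the nonlinear coupling $\overline A(u_\sigma^p)\,J_{K\sigma}^p$ between the merely weakly converging fluxes and the edge reconstruction $u_\sigma^p$. The key point is that the logarithmic-mean edge values~\eqref{eq:u_isig} satisfy $\min(u_{i,K}^p,u_{i,K\sigma}^p)\le u_{i,\sigma}^p\le\max(u_{i,K}^p,u_{i,K\sigma}^p)$, so $|u_{i,\sigma}^p - u_{i,K}^p|\le |D_{K\sigma}\bbu_i^p|$; the uniform discrete $H^1$ bound then forces the piecewise-constant edge reconstruction of $u$ to share the strong $L^2(Q_T)$ limit $u$ of the cell reconstruction. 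Since the entries of $\overline A$ are bounded and continuous, the reconstruction of $\overline A(u_\sigma^p)$ converges to $\overline A(u)$ strongly in every $L^q(Q_T)$, $q<\infty$, by dominated convergence; combined with the weak $L^2$ convergence of $J_{\cE_m,\boldsymbol{\Delta t}_m}$, the usual weak-times-strong argument gives $\overline A(u_{\sigma_m})\,J_{\cE_m,\boldsymbol{\Delta t}_m}\rightharpoonup \overline A(u)\,J$. Passing to the limit in the remaining terms of~\eqref{eq:SM1comp-2-disc} reconstructed on the diamond cells --- the discrete gradient $\tfrac1{d_\sigma}D_{K\sigma}\bbu^p$ converging weakly to $\nabla u$ and the linear term $c^*J$ --- then yields $\nabla u + c^* J + \overline A(u)J = 0$, i.e.~\eqref{eq:SM1comp-2}. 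Together with the already established $J\in L^2(Q_T;(\cV_0)^d)$ and the mass balance~\eqref{eq:weak.u}, this shows that $(u,J)$ is a weak solution in the sense of Definition~\ref{def:defsol} and concludes the proof.
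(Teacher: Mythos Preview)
Your proposal is correct and follows essentially the same route as the paper: the paper organizes the argument into Proposition~\ref{prop:compact} (uniform estimates from summing~\eqref{eq:discrete.entro}, then compactness via the nonlinear discrete Aubin--Simon lemma of \cite[Theorem~3.9]{ACM17}, which simultaneously delivers the a.e.\ convergence of $u$ and the identification of the weak limits of the reconstructed gradients), Lemma~\ref{lem:conv_usig} (strong convergence of the edge reconstruction $u_\sigma^p$, proved exactly via the sandwich argument you sketch), and Proposition~\ref{prop:identify} (passage to the limit in both~\eqref{eq:SM1comp-2-disc} and the discrete mass balance). The only cosmetic difference is that the paper invokes a named discrete Aubin--Simon result rather than a generic time-translate/Kolmogorov argument, and it checks the required $L^2(0,T;H^{-1})$-type bound on the time increments explicitly by testing~\eqref{eq:cons} against a smooth $\phi$ and applying Cauchy--Schwarz with the $L^2$ flux bound.
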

The proof of Theorem~\ref{th:convergence} is the purpose of Section~\ref{sec:conv}. It is based on compactness 
arguments that are deduced from the {\it a priori} estimates established in Theorem~\ref{th:discrete}.

\section{
Numerical analysis at fixed grid
}\label{sec:proofdiscrete}

This section is devoted to the proof of Theorem~\ref{th:discrete}. 

\subsection{A priori estimates}\label{sec:estimates}

The first lemma shows the non-negativity and the mass conservation of the solution to (\ref{eq:scheme}), together with the uniqueness of associated fluxes.

\begin{lemma}\label{lem:first}
 Given $\boldsymbol{u}^{p-1} \in \cA^\cT$ satisfying  
 \begin{equation}\label{eq:mconv0}
 \langle \1_\cT, \bbu^{p-1} \rangle_\cT = M \in \left(\bR_+^*\right)^n,
 \end{equation}
then any solution $\boldsymbol{u}^p$ to (\ref{eq:scheme}) satisfies 
$\langle \1_\cT, \bbu^{p} \rangle_\cT = M$
 and is positive in the sense that
 $u_{i,K}^p >0$ for all $K\in\cT$ and all $1\leq i \leq n.$
Besides, for any solution $\boldsymbol{u}^p$ to  (\ref{eq:scheme}), there exists a unique set of fluxes $\bbJ^p$ 
satisfying (\ref{eq:SM1comp-2-disc})-(\ref{eq:fluxcond1})-(\ref{eq:fluxcond2}). 
\end{lemma}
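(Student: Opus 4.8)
The plan is to establish the three assertions in the order: mass conservation, then strict positivity (the core of the statement), then existence and uniqueness of the fluxes. Mass conservation is immediate: summing the conservation law \eqref{eq:cons} over $K\in\cT$, the flux terms telescope, since every interior face $\sigma=K|L$ is counted twice with opposite contributions by the conservativity \eqref{eq:fluxcond1}, while every exterior face contributes zero by \eqref{eq:fluxcond2}. Hence $\sum_{K}m_K u^p_{i,K}=\sum_K m_K u^{p-1}_{i,K}=M_i$ for each $i$, i.e. $\langle\1_\cT,\bbu^p\rangle_\cT=M$.

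For the positivity I would run a discrete minimum principle tailored to the logarithmic-mean convention \eqref{eq:u_isig}. Let $m:=\min_{1\le j\le n,\,K\in\cT}u^p_{j,K}$ be attained at $(i_0,K_0)$ and assume $m\le0$. On any face $\sigma\in\cE_{K_0}$ one has $\min(u^p_{i_0,K_0},u^p_{i_0,K_0\sigma})=m\le0$, so \eqref{eq:u_isig} forces the edge value $u_{i_0,\sigma}^p=0$. The decisive consequence is that $\overline A_{i_0 j}(u_\sigma^p)=-\overline c_{i_0 j}u^p_{i_0,\sigma}=0$ for $j\ne i_0$, so the $i_0$-th line of \eqref{eq:SM1comp-2-disc} decouples into the scalar identity $\frac1{d_\sigma}D_{K_0\sigma}\bbu_{i_0}^p+(c^*+\overline A_{i_0 i_0}(u_\sigma^p))J_{i_0,K_0\sigma}^p=0$ with coefficient $c^*+\overline A_{i_0 i_0}(u_\sigma^p)\ge c^*>0$. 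Since $D_{K_0\sigma}\bbu_{i_0}^p=u^p_{i_0,K_0\sigma}-m\ge0$, this gives $J_{i_0,K_0\sigma}^p\le0$ on interior faces (and $=0$ on exterior ones by \eqref{eq:fluxcond2}), whence $\sum_{\sigma\in\cE_{K_0}}m_\sigma J_{i_0,K_0\sigma}^p\le0$. Plugging this into \eqref{eq:cons} yields $-\frac{m_{K_0}}{\Delta t_p}(m-u^{p-1}_{i_0,K_0})\le0$. If $m<0$ this is a contradiction because $u^{p-1}\ge0$; thus $m\ge0$. If $m=0$, the same computation gives $0\ge\sum_{\sigma}m_\sigma J_{i_0,K_0\sigma}^p=\frac{m_{K_0}}{\Delta t_p}u^{p-1}_{i_0,K_0}\ge0$, so the sum vanishes and each $J_{i_0,K_0\sigma}^p=0$; the scalar identity then forces $u^p_{i_0,K_0\sigma}=0$ at every neighbour, and iterating over the connected mesh yields $u^p_{i_0,\cdot}\equiv0$, so $M_{i_0}=0$, contradicting $M_{i_0}>0$. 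Therefore $m>0$.

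For the fluxes I would observe that the edge values produced by \eqref{eq:u_isig} are always non-negative, so $B_\sigma:=c^*{\rm I}+\overline A(u_\sigma^p)$ has positive diagonal and nonpositive off-diagonal entries; moreover the columns of $\overline A(u_\sigma^p)$ sum to zero (a direct consequence of $\overline c_{ij}=\overline c_{ji}$, consistent with ${\rm Ran}(\overline A)\subset\cV_0$ in \eqref{eq:prop_Abar}), so every column of $B_\sigma$ sums to $c^*$. Hence $B_\sigma$ is strictly column diagonally dominant, therefore invertible, and \eqref{eq:SM1comp-2-disc} determines the interior fluxes uniquely as $J_{K\sigma}^p=-\frac1{d_\sigma}B_\sigma^{-1}D_{K\sigma}\bbu^p$, the exterior ones being fixed to $0$ by \eqref{eq:fluxcond2}. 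Writing \eqref{eq:SM1comp-2-disc} from the two sides of an interior face and using the invertibility of $B_\sigma$ recovers the conservativity \eqref{eq:fluxcond1} automatically, which closes the uniqueness claim.

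The main obstacle is precisely the positivity, because the inter-species coupling through $\overline A(u_\sigma^p)$ makes a plain scalar maximum principle fail: a priori the $i_0$-th flux depends on the jumps of every species. What rescues the argument is that the logarithmic-mean edge value vanishes exactly at a face touching a nonpositive value, thereby annihilating the off-diagonal coupling in line $i_0$ right where the minimum sits; the borderline case $m=0$ then needs, in addition, the connectedness of the mesh and the nondegeneracy hypothesis \eqref{eq:masspos.init}.
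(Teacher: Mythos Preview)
Your argument is correct. The mass-conservation and positivity parts follow the same route as the paper: sum~\eqref{eq:cons} over cells and use~\eqref{eq:fluxcond1}--\eqref{eq:fluxcond2} for the first; for the second, exploit that the logarithmic-mean edge value~\eqref{eq:u_isig} vanishes at any face touching a nonpositive cell value, which kills the off-diagonal coupling in the $i_0$-th line of~\eqref{eq:SM1comp-2-disc} and reduces the argument to a scalar minimum principle, followed by propagation over the connected mesh and the mass constraint $M_{i_0}>0$. Your case split $m<0$ versus $m=0$ is a cosmetic variant of the paper's single contradiction argument.

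The flux-uniqueness part, however, is genuinely different and worth noting. The paper first uses the just-proved \emph{strict} positivity of $\bbu^p$ to guarantee $u^p_{i,\sigma}>0$, then multiplies~\eqref{eq:SM1comp-2-disc} by $M(u_\sigma^p)^{-1}$ and invokes Lemma~\ref{lem:M} to conclude that $B(u_\sigma^p)=c^*M(u_\sigma^p)^{-1}+M(u_\sigma^p)^{-1}\overline A(u_\sigma^p)$ is symmetric positive definite, hence invertible. You instead argue directly on $c^*{\rm I}+\overline A(u_\sigma^p)$: nonnegativity of the edge values and the column-sum property of $\overline A$ give strict column diagonal dominance with excess exactly $c^*$ in every column, hence invertibility. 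Your route is more elementary (no appeal to Lemma~\ref{lem:M}) and, notably, does not rely on the positivity of $\bbu^p$; the paper's route, on the other hand, yields the explicit representation $J_{K\sigma}^p=-\frac{1}{d_\sigma}B(u_\sigma^p)^{-1}D_{K\sigma}\log(\bbu^p)$, which is the form used later in the entropy estimate of Lemma~\ref{lem:entropy}.
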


\begin{proof}
 Let $\boldsymbol{u}^p$ be a solution to (\ref{eq:scheme}) 
 and let $1\leq i \leq n$. Let us first prove that the total volume of each specie is conserved, so that $\langle \1_\cT, \bbu^{p} \rangle_\cT = M$.
Summing equation~\eqref{eq:cons} over $K\in\cT$ gives
$$ 
\langle \1_\cT, \bbu^{p} \rangle_\cT - \langle \1_\cT, \bbu^{p-1} \rangle_\cT 
= - \laurent{\Delta t_p}\sum_{\sigma = K|L \in \cE_{\rm int}}m_\sigma \left(J_{K\sigma}^p + J_{L\sigma}^p \right) 
- \laurent{\Delta t_p}\sum_{\sigma \in \cE_{\rm ext}} m_\sigma J_{K\sigma}^p. 
$$
Then it follows directly from the local conservativity of the scheme~\eqref{eq:fluxcond1} and from the 
discrete no-flux boundary condtion~\eqref{eq:fluxcond2} that 
$$\langle \1_\cT, \bbu^{p} \rangle_\cT = \langle \1_\cT, \bbu^{p-1} \rangle_\cT = M.$$
\medskip
 
Let us now prove that $\boldsymbol{u}^p$ is positive. Let $1\leq i \leq n$. We consider a cell $K\in \cT$ where $\boldsymbol{u}_i^p$ reaches its minimum, i.e. such that $u_{i,K}^p \leq u_{i,L}^p$ for all $L\in \cT$, and denote 
 $w_i^p:= u_{i,K}^p  = \min_{L\in \cT} u_{i,L}^p$. \laurent{Assume} for contradiction that $w_i^p = u_{i,K}^p \leq 0$. Let us recall again equation (\ref{eq:cons}), which implies that
 \begin{equation}\label{eq:cons2}
 m_K \frac{u_{i,K}^p - u_{i,K}^{p-1}}{\Delta t_p} = - \sum_{\sigma \in \cE_K} m_\sigma J_{i, K\sigma}^p.
 \end{equation}
 On the one hand, the term on the left-hand side is non-positive since $u_{i,K}^{p-1} \geq 0 \geq u_{i,K}^p$. 
On the other hand, the specific choice~\eqref{eq:u_isig} for the edge volume fractions implies 
that $u_{i,\sigma}^p = 0$ for all $\sigma \in \cE_K$.
Therefore, $\overline{A}_{ij}(u_\sigma^p) = 0$ for all $1\leq j \neq i \leq n$. As a consequence, 
relation~\eqref{eq:SM1comp-2-disc} reduces to
$$
\frac1{d_\sigma} D_{K\sigma}\boldsymbol{u}_i^p + \left( c^* +\sum_{1\leq j \neq i \leq n} \overline{c}_{ij} u_{j,\sigma}^p\right) J_{i, K\sigma}^p = 0.
$$
Since $u_{j,\sigma}^p \geq 0$, $\overline{c}_{ij} \geq 0$ and  $D_{K\sigma}\boldsymbol{u}_i^p \geq 0$ 
for all $1\leq i,j \leq n$, we obtain that $J_{i, K\sigma}^p \leq 0$ for all $\sigma \in \cE_K$. Using 
(\ref{eq:cons2}),  this yields that 
$J_{i, K\sigma}^p = \tau_\sigma D_{K\sigma}\boldsymbol{u}_i^p= 0$ for all $\sigma \in \cE_K$. As a consequence, 
$u_{i,K}^p = u_{i,L}^p$ for all $L\in \cT$ such that $\sigma = K|L \in \cE_K$. Iterating this argument and since $\Omega$ is connected, 
we thus obtain that $u_{i,L}^p = w_i^p \leq 0$ for all $L\in \cT$. 
This implies that $\langle \bbu_i^p, \1_\cT \rangle_\cT \leq 0$ which yields a contradiction with the property 
$\langle \1_\cT, \bbu_i^{p} \rangle_\cT =M_i >0$ we just
established. Thus, $\boldsymbol{u}^p$ is positive. 

\medskip

As a consequence, for all $\sigma\in\cE_{{\rm int}}$ and all $1\leq i \leq n$, $u^p_{i,\sigma}>0$. The fact that 
there exists a unique $\bbJ^p$ associated to $\bbu^p$ via (\ref{eq:SM1comp-2-disc})-(\ref{eq:fluxcond1})-(\ref{eq:fluxcond2}) 
is then a consequence of Lemma~\ref{lem:M}.
Indeed, for all 
$K\in \cT$ and all $\sigma \in \cE_{K,{\rm int}}$, noticing that $D_{K\sigma} \bbu^p = M(u_\sigma) D_{K\sigma}\log(\bbu^p)$,
we can rewrite equivalently (\ref{eq:SM1comp-2-disc}) as 
$$
\frac1{d_\sigma} M(u^p_\sigma)  D_{K\sigma}\log(\bbu^p) + \left(c^* {\rm I} + \overline{A}(u^p_\sigma)\right)J_{K\sigma}^p = 0.
$$
The positivity of $u^p_\sigma$ implies the inversibility of matrix $M(u^p_\sigma)$. As a consequence, it holds that
\begin{equation}\label{eq:eqinter}
\frac1{d_\sigma} D_{K\sigma}\log(\bbu^p) + \left(c^*  M(u^p_\sigma)^{-1}  + 
M(u^p_\sigma)^{-1} \overline{A}(u^p_\sigma)\right)J_{K\sigma}^p = 0.
\end{equation}
Moreover, thanks to Lemma~\ref{lem:M}, it holds that $B(u_\sigma^p) = c^*  M(u^p_\sigma)^{-1}  + M(u^p_\sigma)^{-1} \overline{A}(u^p_\sigma)$ is a symmetric positive definite matrix, and the only solution $J_{K\sigma}^p$ to (\ref{eq:eqinter}) is given by
\begin{equation}\label{eq:J_Dlog}
J_{K\sigma}^p = - \frac1{d_\sigma} B(u_\sigma^p)^{-1} D_{K\sigma}\log(\bbu^p).
\end{equation}
Hence the desired result. 
\end{proof}

The next lemma shows that the total discrete flux vanishes across all edges and 
that the volume filling constraint is automatically satisfied without being enforced. 

\begin{lemma}\label{lem:AT}
Given $\boldsymbol{u}^{p-1}\in \mathcal A^{\mathcal T}$ satisfying \eqref{eq:mconv0}, 
any solution $(\bbu^p, \bbJ^p)$ to \eqref{eq:scheme} belongs to $\cA^\cT \times (\cV_0)^\cE$. 
\end{lemma}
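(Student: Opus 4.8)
The plan is to reduce the whole statement to a single scalar balance law for the cell-wise sums
$$
s_K^p := \langle \un, u_K^p\rangle = \sum_{i=1}^n u_{i,K}^p, \qquad \boldsymbol{s}^p := (s_K^p)_{K\in\cT},
$$
and to show that this scalar unknown is forced to be the constant field equal to $1$. Throughout I use Lemma~\ref{lem:first}, which already guarantees that $\bbu^p$ is positive (so that the edge values $u_\sigma^p$ from \eqref{eq:u_isig} are well defined and positive) and that the fluxes $\bbJ^p$ exist and are uniquely determined by \eqref{eq:SM1comp-2-disc}--\eqref{eq:fluxcond2}. The key observation is that the equation governing $\boldsymbol{s}^p$ decouples from the full system and is a plain two-point diffusion problem.

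\emph{First step: test the flux relation against $\un$.} For any interior face $\sigma\in\cE_{K,{\rm int}}$, taking $\langle\un,\cdot\rangle$ in \eqref{eq:SM1comp-2-disc} gives
\begin{equation*}
\frac{1}{d_\sigma}\langle \un, D_{K\sigma}\bbu^p\rangle + c^*\langle \un, J_{K\sigma}^p\rangle + \langle \un, \overline{A}(u_\sigma^p)J_{K\sigma}^p\rangle = 0.
\end{equation*}
The crucial point is that the last term vanishes: by \eqref{eq:prop_Abar} one has ${\rm Ran}(\overline A(v))\subset\cV_0$ for every $v$, equivalently $\un^T\overline A(v)=0$ (the columns of $\overline A(v)$ sum to zero, as one checks directly from \eqref{eq:defAbar} using $\overline c_{ij}=\overline c_{ji}$). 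Since $\langle \un, D_{K\sigma}\bbu^p\rangle = D_{K\sigma}\boldsymbol{s}^p$, this yields the two-point law
\begin{equation*}
S_{K\sigma}^p := \langle \un, J_{K\sigma}^p\rangle = -\frac{1}{c^* d_\sigma}\, D_{K\sigma}\boldsymbol{s}^p, \qquad \sigma\in\cE_{K,{\rm int}},
\end{equation*}
while $S_{K\sigma}^p = 0$ on exterior faces by \eqref{eq:fluxcond2}.

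\emph{Second step: sum the conservation law over the species.} Adding \eqref{eq:cons} over $1\le i\le n$, using $s_K^{p-1}=1$ (because $\bbu^{p-1}\in\cA^\cT$) and the previous expression for $S_{K\sigma}^p$ together with $m_\sigma S_{K\sigma}^p = \tfrac{\tau_\sigma}{c^*}(s_K^p-s_L^p)$ on $\sigma=K|L$, the field $\boldsymbol{s}^p$ is seen to solve
\begin{equation*}
\frac{m_K}{\Delta t_p}\, s_K^p + \frac{1}{c^*}\sum_{\sigma=K|L\in\cE_{K,{\rm int}}}\tau_\sigma\,(s_K^p - s_L^p) = \frac{m_K}{\Delta t_p}, \qquad \forall K\in\cT.
\end{equation*}
This is a linear system whose matrix is $\tfrac{1}{\Delta t_p}\,{\rm diag}(m_K) + \tfrac{1}{c^*}\mathbb{L}$, where $\mathbb{L}$ is the symmetric positive semi-definite two-point graph Laplacian; the matrix is therefore symmetric positive definite, hence invertible. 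Since the constant field $\boldsymbol{s}^p=\1_\cT$ trivially satisfies the system (all jumps vanish and each equation reduces to $m_K/\Delta t_p = m_K/\Delta t_p$), uniqueness forces $\boldsymbol{s}^p=\1_\cT$, i.e. $\sum_{i=1}^n u_{i,K}^p=1$ for every $K$. Combined with the positivity from Lemma~\ref{lem:first}, this gives $\bbu^p\in\cA^\cT$.

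Finally, once $\boldsymbol{s}^p=\1_\cT$ is known, all jumps $D_{K\sigma}\boldsymbol{s}^p$ vanish, so the two-point law of the first step gives $S_{K\sigma}^p=0$ on interior faces; together with $S_{K\sigma}^p=0$ on exterior faces this shows $\langle\un,J_{K\sigma}^p\rangle=0$ for every $\sigma$, i.e. $\bbJ^p\in(\cV_0)^\cE$. The only genuinely delicate point is the decoupling in the first step: everything hinges on the fact that summing the flux equations annihilates the $\overline A$ contribution, which is exactly the range inclusion \eqref{eq:prop_Abar}. After that, the argument is standard linear algebra for a symmetric positive definite system, and the volume-filling constraint and the vanishing of the total flux follow simultaneously.
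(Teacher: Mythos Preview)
Your proof is correct and follows essentially the same approach as the paper: both sum the flux relation and the conservation law over the species, use the range inclusion \eqref{eq:prop_Abar} to kill the $\overline A$ term, and identify the resulting scalar problem for $\boldsymbol{s}^p$ as the backward Euler TPFA scheme for a heat equation with diffusion coefficient $1/c^*$, whose unique solution is the constant field $\1_\cT$. The only difference is cosmetic: the paper simply invokes well-posedness of that linear scheme, while you spell out the coefficient matrix as a mass-plus-graph-Laplacian and argue it is symmetric positive definite.
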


\begin{proof}
Since $\bbu^{p-1}\in \mathcal A^{\mathcal T}$ satisfies (\ref{eq:mconv0}), $\bbu^{p-1}$ is nonnegative, 
and using Lemma~\ref{lem:first}, any corresponding solution $\bbu^p$ to 
(\ref{eq:scheme}) is then positive. 

Let us denote by $\bbw^{p-1} = (w_K^{p-1})_{K\in \cT}:= \langle \un, \bbu^p \rangle$, and let us denote by  
$G_{\sigma K}^p:= 
\langle \un, J_{K\sigma}^p \rangle$ for all $K\in\cT$ and $\sigma \in \cE_K$.
Summing equations (\ref{eq:cons}) for $i=1,\cdots,n$, we obtain that 
$$
 m_K \frac{w_{K}^p - w_{K}^{p-1}}{\Delta t_p} = - \sum_{\sigma \in \cE_K} m_\sigma G_{K\sigma}^p.
$$
In addition, summing~\eqref{eq:SM1comp-2-disc} over $i$ provides that for all $\sigma = K|L \in \cE_{\rm int}$, 
$$
 = \frac1{d_\sigma} D_{K\sigma} \boldsymbol{w}^p + c^* G_{K\sigma}^p + 
 \left \langle \un, \overline{A}(u_\sigma^p) J_{K\sigma}^p \right\rangle\\
 \overset{\eqref{eq:prop_Abar}}{=} \frac1{d_\sigma} D_{K\sigma} \boldsymbol{w}^p + c^* G_{K\sigma}^p.
$$
Thus, $\bbw$ is solution to the classical backward Euler TPFA scheme for the heat equation with diffusion coefficient $\frac{1}{c^*}$. This scheme is well-posed and 
$\bbw^p = \bbw^{p-1} = \bbun_\cT$ is its unique solution, which implies that $\bbu \in \cA^\cT$. 
Moreover, the fluxes $G_{K\sigma}^p$ are all equal to zero, so that $J_{K\sigma} \in (\cV_0)^\cE$.
\end{proof}

The last statement of this section is devoted to the entropy entropy-dissipation estimate~\eqref{eq:discrete.entro}.
\begin{lemma}\label{lem:entropy}
Given $\boldsymbol{u}^{p-1} \in \mathcal A^{\mathcal T}$, any solution 
$\left(\boldsymbol{u}^p, \bbJ^p\right) \in \mathcal A^{\mathcal T} \times (\cV_0)^\cE$ to \eqref{eq:scheme} satisfies
 \begin{equation}\label{eq:discent}
 E_\cT(\boldsymbol{u}^{p}) + 
 \Delta t_p \sum_{\sigma = K|L \in \cE_{\rm int}} \left( \frac{c^*}2 m_\sigma d_\sigma |J_{K\sigma}^p|^2 
+ \frac{\alpha}2 \tau_\sigma \left| D_{K\sigma} \sqrt{\bbu^p} \right|^2\right) \leq E_\cT(\boldsymbol{u}^{p-1}).
 \end{equation} 
\end{lemma}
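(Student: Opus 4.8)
The plan is to transpose the formal continuous computation of Section~\ref{sec:entrop} to the discrete level. The three genuinely discrete ingredients will be the convexity of the entropy density, a discrete integration by parts, and a logarithmic-mean inequality that converts the $M(u_\sigma^p)$-weighted jump of $\log \bbu^p$ into the jump of $\sqrt{\bbu^p}$. First I would bound the entropy increment from above by exploiting the convexity of $\phi(s) = s\log s$: since $\phi(a)-\phi(b)\leq \phi'(a)(a-b) = (\log a + 1)(a-b)$, one obtains
$$
E_\cT(\bbu^p) - E_\cT(\bbu^{p-1}) \leq \sum_{i=1}^n\sum_{K\in\cT} m_K\bigl(\log u_{i,K}^p + 1\bigr)\bigl(u_{i,K}^p - u_{i,K}^{p-1}\bigr).
$$
The contribution of the constant $1$ vanishes, because the mass conservation established in Lemma~\ref{lem:first} gives $\sum_{K} m_K (u_{i,K}^p - u_{i,K}^{p-1}) = M_i - M_i = 0$ for each $i$. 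Hence the increment is controlled by $\sum_{i,K} m_K \log(u_{i,K}^p)(u_{i,K}^p - u_{i,K}^{p-1})$.

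Next I would insert the conservation law \eqref{eq:cons}, replacing each factor $m_K(u_{i,K}^p - u_{i,K}^{p-1})$ by $-\Delta t_p\sum_{\sigma\in\cE_K} m_\sigma J_{i,K\sigma}^p$, and reorganise the resulting double sum edge by edge. On each interior edge $\sigma = K|L$ the conservativity \eqref{eq:fluxcond1} (i.e.\ $J_{L\sigma}^p = -J_{K\sigma}^p$) turns the two cell contributions into $-J_{i,K\sigma}^p\,D_{K\sigma}\boldsymbol{u}_i^p$ tested against the log, while the exterior edges drop out by \eqref{eq:fluxcond2}. Writing $\bbw^p := \log\bbu^p$ componentwise, this discrete Green formula yields
$$
E_\cT(\bbu^p) - E_\cT(\bbu^{p-1}) \leq \Delta t_p \sum_{\sigma = K|L\in\cE_{\rm int}} m_\sigma\, \langle J_{K\sigma}^p, D_{K\sigma}\bbw^p\rangle.
$$

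I would then use the discrete flux law. Rewriting \eqref{eq:SM1comp-2-disc} as in \eqref{eq:eqinter}--\eqref{eq:J_Dlog}, one has the two equivalent identities $D_{K\sigma}\bbw^p = -d_\sigma B(u_\sigma^p)J_{K\sigma}^p$ and $J_{K\sigma}^p = -\tfrac1{d_\sigma}B(u_\sigma^p)^{-1}D_{K\sigma}\bbw^p$, with $B(u_\sigma^p)$ symmetric positive definite by Lemma~\ref{lem:M}. Averaging the two resulting expressions for $\langle J_{K\sigma}^p, D_{K\sigma}\bbw^p\rangle$ (the exact discrete analogue of combining \eqref{eq:J.w_1}--\eqref{eq:J.w_2}) and applying the matrix bounds $B(u_\sigma^p)\geq c^*{\rm I}$ and $B(u_\sigma^p)^{-1}\geq \tfrac1{c^*+2\overline c}M(u_\sigma^p)$ from \eqref{eq:B(v).ineq} gives, on each edge,
$$
\langle J_{K\sigma}^p, D_{K\sigma}\bbw^p\rangle \leq -\frac{c^* d_\sigma}{2}|J_{K\sigma}^p|^2 - \frac{1}{2d_\sigma(c^*+2\overline c)}\sum_{i=1}^n u_{i,\sigma}^p\bigl(D_{K\sigma}\boldsymbol{w}_i^p\bigr)^2.
$$
Here one must first check that $u_\sigma^p\in(0,1]^n$ so that \eqref{eq:B(v).ineq} applies; this follows from $\bbu^p\in\cA^\cT$ together with the strict positivity of Lemma~\ref{lem:first}, since the logarithmic mean lies between its two arguments.

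The crux is the last term, and this is precisely where the logarithmic-mean choice \eqref{eq:u_isig} pays off. Setting $a = u_{i,K}^p$ and $b = u_{i,K\sigma}^p$, the definition of $u_{i,\sigma}^p$ gives $u_{i,\sigma}^p(D_{K\sigma}\boldsymbol{w}_i^p)^2 = (b-a)(\log b - \log a)$, and the elementary inequality $(b-a)(\log b - \log a)\geq 4(\sqrt b - \sqrt a)^2$ --- a direct consequence of the logarithmic mean being dominated by the arithmetic mean --- yields $u_{i,\sigma}^p(D_{K\sigma}\boldsymbol{w}_i^p)^2 \geq 4\,(D_{K\sigma}\sqrt{\boldsymbol{u}_i^p})^2$. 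Summing over $i$ and recalling $\alpha = 4/(c^*+2\overline c)$ and $\tau_\sigma = m_\sigma/d_\sigma$ converts the edge bound into
$$
m_\sigma\langle J_{K\sigma}^p, D_{K\sigma}\bbw^p\rangle \leq -\frac{c^*}2 m_\sigma d_\sigma|J_{K\sigma}^p|^2 - \frac{\alpha}2\tau_\sigma\bigl|D_{K\sigma}\sqrt{\bbu^p}\bigr|^2.
$$
Summing over interior edges and inserting this into the discrete Green inequality produces exactly \eqref{eq:discent} after rearrangement. I expect the logarithmic-mean inequality to be the only delicate point; the averaging step and the matrix bounds are faithful discrete copies of the continuous argument, and the rest is routine bookkeeping.
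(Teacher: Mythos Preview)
Your proposal is correct and follows essentially the same route as the paper's proof: convexity of $s\mapsto s\log s$ for the time increment, discrete integration by parts via \eqref{eq:fluxcond1}--\eqref{eq:fluxcond2}, the splitting of $\langle J_{K\sigma}^p, D_{K\sigma}\log\bbu^p\rangle$ into $B$ and $B^{-1}$ quadratic forms using \eqref{eq:J_Dlog}, the matrix bounds \eqref{eq:B(v).ineq}, and the logarithmic-mean inequality $(b-a)(\log b-\log a)\geq 4(\sqrt b-\sqrt a)^2$. The only cosmetic difference is that you eliminate the ``$+1$'' from $\phi'(a)=\log a+1$ via mass conservation per species (Lemma~\ref{lem:first}), whereas the paper eliminates it via the volume-filling constraint $\sum_i u_{i,K}^p=\sum_i u_{i,K}^{p-1}=1$ per cell (Lemma~\ref{lem:AT}); both are valid and lead to the same bound.
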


\begin{proof}
 Multiplying equation (\ref{eq:cons}) by $\Delta t_p \log(u_{i,K}^p)$ (which makes sense since $\bbu^p$ is positive 
 owing to Lemma~\ref{lem:first}), and summing over all the cells and species leads to
 \begin{equation}\label{eq:T1+T2}
 T_1 + T_2 = 0, 
 \end{equation}
 where we have set 
 \begin{align*}
 T_1 = &  \sum_{K\in \cT} \sum_{i=1}^n \left[ u_{i,K}^p \log(u_{i,K}^p) - u_{i,K}^{p-1}\log(u_{i,K}^p)\right]m_K, \\
 T_2 = & \Delta t_p\sum_{i=1}^n\sum_{K\in\cT}\sum_{\sigma \in \cE_K} m_\sigma J_{i, K\sigma}^p\log(u_{i,K}^p).
 \end{align*}
 On the one hand, using the convexity of the function $\bR_+ \ni x \mapsto x\log x$, it holds that
$$
u_{i,K}^p - u_{i,K}^{p-1} + u_{i,K}^p \log(u_{i,K}^p) - u_{i,K}^{p-1}\log(u_{i,K}^p) \geq u_{i,K}^p \log(u_{i,K}^p) - u_{i,K}^{p-1}\log(u_{i,K}^{p-1}), 
$$
which implies, together with Lemma~\ref{lem:AT}, that
\begin{equation}\label{eq:T1}
T_1 \geq  E_{\cT}(\boldsymbol{u}^p) - E_\cT(\boldsymbol{u}^{p-1}). 
\end{equation}
On the other hand, the conservativity of the fluxes~\eqref{eq:fluxcond1} and the discrete no-flux boundary 
condition~\eqref{eq:fluxcond2} allow to reorganise the term $T_2$ as
$$
T_2 = - \Delta t_p \sum_{\sigma = K|L \in \cE_{\rm int}} m_\sigma \left\langle J_{K\sigma}^p, D_{K\sigma} \log(\bbu^p) \right\rangle.
$$
Bearing in mind the expression~\eqref{eq:J_Dlog} of the fluxes, 
\begin{align*}
- \left\langle J_{K\sigma}^p, D_{K\sigma} \log(\bbu^p) \right\rangle = & 
\frac{d_\sigma} 2 \left\langle J_{K\sigma}^p, B(u_\sigma^p) J_{K\sigma}^p \right\rangle \\
&+ \frac1{2 d_\sigma}\left\langle D_{K\sigma} \log(\bbu^p), B(u_\sigma^p)^{-1} D_{K\sigma} \log(\bbu^p) \right\rangle. 
\end{align*}
Then estimates~\eqref{eq:B(v).ineq} provide that 
$$
 \left\langle J_{K\sigma}^p, B(u_\sigma^p) J_{K\sigma}^p \right\rangle \geq c^* \left|J_{K\sigma}\right|^2
$$
and 
$$
\left\langle D_{K\sigma} \log(\bbu^p), B(u_\sigma^p)^{-1} D_{K\sigma} \log(\bbu^p) \right\rangle 
\geq \frac{\alpha}4 \left\langle D_{K\sigma} \log(\bbu^p), M(u_\sigma^p) D_{K\sigma} \log(\bbu^p) \right\rangle. 
$$
Thanks to the particular choice~\eqref{eq:u_isig} for $u_\sigma^p$, the right-hand side rewrites
$$
 \left\langle D_{K\sigma} \log(\bbu^p), M(u_\sigma^p) D_{K\sigma} \log(\bbu^p) \right\rangle 
= 
 \left\langle D_{K\sigma} \log(\bbu^p), D_{K\sigma} \bbu^p\right\rangle \\
\geq  
4 \left| D_{K\sigma} \sqrt{\bbu^p} \right|^2, 
$$
the last inequality being a consequence of the elementary inequality 
$$(a-b)(\log(a) - \log(b))\geq 4 (\sqrt a - \sqrt b)^2$$ 
holding for any positive $a,b$.
Summing up, we have 
\begin{equation}\label{eq:T2}
T_2 \geq \Delta t_p \sum_{\sigma = K|L \in \cE_{\rm int}} \left( \frac{c^*}2 m_\sigma d_\sigma |J_{K\sigma}^p|^2 
+ \frac{\alpha}2 \tau_\sigma \left| D_{K\sigma} \sqrt{\bbu^p} \right|^2\right).
\end{equation}
To conclude the proof, it only remains to incorporate~\eqref{eq:T1} and \eqref{eq:T2} in \eqref{eq:T1+T2}.
\end{proof}

\subsection{Existence of discrete solutions}\label{sec:exis}

The purpose of this section is to prove the existence of a solution to \eqref{eq:scheme}.

\begin{proposition}
 Given $\bbu^{p-1} \in \cA^\cT$ satisfying~\eqref{eq:mconv0}, then there exists at least one solution 
 $\left(\bbu^p, \bbJ^p\right) \in \cA^\cT \times (\cV_0)^\cE$ to the scheme~\eqref{eq:scheme}.
\end{proposition}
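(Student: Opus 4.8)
The plan is to eliminate the fluxes and recast the scheme as the search for a zero of a single continuous map on the compact convex set $\cA^\cT$, to which a topological degree argument can then be applied. Whenever $\bbu$ has positive components, solving~\eqref{eq:SM1comp-2-disc} for the fluxes (equivalently, using~\eqref{eq:J_Dlog}) gives $J_{K\sigma}(\bbu) = -\frac{1}{d_\sigma}\left(c^*{\rm I}+\overline A(u_\sigma)\right)^{-1}D_{K\sigma}\bbu$ on interior faces, and $J_{K\sigma}(\bbu)=0$ on exterior faces. The crucial point is that, thanks to the logarithmic edge value~\eqref{eq:u_isig}, the map $\bbu\mapsto u_\sigma$ extends continuously down to the degenerate set $\{u_{i,\sigma}=0\}$, while $c^*{\rm I}+\overline A(u_\sigma)$ stays invertible for every $\bbu\in\cA^\cT$ (it is strictly column diagonally dominant with positive diagonal, hence a nonsingular M-matrix, by~\eqref{eq:defAbar}). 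Substituting these fluxes into~\eqref{eq:cons} therefore yields a map
\[
\Phi:\cA^\cT\to(\bR^\cT)^n,\qquad \Phi_{i,K}(\bbu)=m_K\frac{u_{i,K}-u_{i,K}^{p-1}}{\Delta t_p}+\sum_{\sigma\in\cE_K}m_\sigma\,J_{i,K\sigma}(\bbu),
\]
continuous on the whole closed simplex $\cA^\cT$, whose zeros are exactly the solutions $\bbu^p$ of~\eqref{eq:scheme}.

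Next I would check that $\Phi$ takes values in the tangent space $T=\{\bbphi\in(\bR^\cT)^n:\langle\un,\phi_K\rangle=0\ \forall K\in\cT\}$ of the affine constraint, so that the Brouwer degree of $\Phi$ on $\cA^\cT$ is well defined, the dimension of $T$ matching $\dim\cA^\cT=(n-1)|\cT|$. This follows from $\overline A(v)^{T}\un=0$ (a restatement of ${\rm Ran}(\overline A(v))\subset\cV_0$ in~\eqref{eq:prop_Abar}), which gives the left-eigenvector identity $\un^{T}\left(c^*{\rm I}+\overline A(u_\sigma)\right)^{-1}=\tfrac1{c^*}\un^{T}$, whence $\langle\un,J_{K\sigma}(\bbu)\rangle=-\tfrac1{c^*d_\sigma}D_{K\sigma}\langle\un,\bbu\rangle=0$ for every $\bbu\in\cA^\cT$, so that $\sum_i\Phi_{i,K}(\bbu)=0$.

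The core of the argument is then a homotopy together with a boundary non-vanishing property. I would introduce, for $\lambda\in[0,1]$, the map $\Phi_\lambda$ obtained by replacing $\overline A(u_\sigma)$ by $\lambda\,\overline A(u_\sigma)$ in the fluxes; it is jointly continuous on $[0,1]\times\cA^\cT$ since $c^*{\rm I}+\lambda\overline A(u_\sigma)$ stays invertible for all $\lambda$. At $\lambda=1$ we recover $\Phi$, while at $\lambda=0$ the system decouples into $n$ independent backward-Euler TPFA discretisations of the heat equation with diffusivity $1/c^*$, which admits a unique strictly positive solution in $\cA^\cT$ and is associated with an invertible symmetric positive linear operator, so its degree is $\pm1\neq0$. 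The key observation is that for every $\lambda\in[0,1]$ any zero of $\Phi_\lambda$ is strictly positive: since $\lambda\,\overline c_{ij}\geq0$, the discrete minimum principle in the proof of Lemma~\ref{lem:first} applies verbatim (at a cell realising a nonpositive minimum of $u_i$, the edge value~\eqref{eq:u_isig} vanishes, the $i$-th flux equation decouples with coefficient $\geq c^*$, the outgoing fluxes are nonpositive, and connectedness of $\Omega$ together with~\eqref{eq:mconv0} yields a contradiction). Hence $\Phi_\lambda\neq0$ on $\partial\cA^\cT$ for all $\lambda$, the degree is invariant along the homotopy, and being nonzero it forces $\Phi=\Phi_1$ to vanish at some $\bbu^p\in\cA^\cT$, necessarily strictly positive. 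Lemma~\ref{lem:first} then provides the uniquely associated fluxes $\bbJ^p$, and Lemma~\ref{lem:AT} gives $(\bbu^p,\bbJ^p)\in\cA^\cT\times(\cV_0)^\cE$.

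I expect the main obstacle to lie precisely in the behaviour near the degenerate boundary $\partial\cA^\cT$. The nonlinearity enters through $\log\bbu^p$ in~\eqref{eq:SM1comp-2-disc}, and the entropy estimate of Lemma~\ref{lem:entropy} does \emph{not}, at fixed mesh, furnish a uniform positive lower bound on the components $u_{i,K}$; consequently one cannot simply apply Brouwer's theorem to a frozen-coefficient iteration map, which would moreover fail to preserve positivity. This is exactly where the logarithmic edge value~\eqref{eq:u_isig} is indispensable: it is what renders the fluxes a continuous function of $\bbu$ up to the boundary (so that $\Phi$ extends continuously and the degree is defined), and what triggers the decoupling mechanism underlying the boundary non-vanishing of $\Phi_\lambda$.
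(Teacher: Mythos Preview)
Your proof is correct and shares the paper's core strategy: a topological degree argument along the homotopy $\overline A \mapsto \lambda\,\overline A$, degenerating at $\lambda=0$ to $n$ decoupled backward-Euler TPFA heat equations. The difference is that you eliminate the fluxes via $J_{K\sigma}(\bbu)=-\tfrac{1}{d_\sigma}\bigl(c^*{\rm I}+\lambda\,\overline A(u_\sigma)\bigr)^{-1}D_{K\sigma}\bbu$ and set up the degree directly on the compact simplex $\cA^\cT$ (mapping into its tangent space), whereas the paper keeps $(\bbu,\bbJ)$ as joint unknowns and computes the degree on an $\eta$-enlargement $\cA^\cT_\eta\times(\cV_0)^\cE_\eta$ inside the full ambient product space. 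Your route is slightly more elementary: since $\cA^\cT$ is already compact, boundary avoidance reduces to the strict-positivity argument of Lemma~\ref{lem:first} alone, and you do \emph{not} need the entropy--dissipation estimate of Lemma~\ref{lem:entropy}, which the paper invokes precisely to confine $\bbJ^{(\lambda)}$ to a bounded set. In exchange, you must verify that the flux formula extends continuously to the degenerate boundary $\partial\cA^\cT$; this is where your use of the continuity of the log-mean~\eqref{eq:u_isig} and the strict column diagonal dominance of $c^*{\rm I}+\overline A(u_\sigma)$ pays off, while the paper's open set sits in the ambient space and never touches that boundary.
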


\begin{proof}
 The proof relies on a topological degree argument~\cite{leray1934topologie,deimling2010nonlinear}. 
 The idea is to transform continuously our complex nonlinear system into a linear system while 
 guaranteeing that enough {\it a priori} estimates controlling the solution remain valid all along the homotopy. 
 We sketch the main ideas of the proof, making the homotopy explicit. 
 
 For $\lambda \in [0,1]$, we look for $\left(\bbu^{(\lambda)}, \bbJ^{(\lambda)}\right) \in \bR^{n\times \cT} \times \bR^{n\times\cE}$ 
 solution to the algebraic system~\eqref{eq:scheme} where the matrix $\overline A(u_{\sigma}^p)$ is replaced 
 by $\lambda \overline A(u_{\sigma}^{(\lambda)})$.
Our system~\eqref{eq:scheme} corresponds to the case $\lambda = 1$, whereas the case
$\lambda = 0$ corresponds to the usual TPFA finite volume scheme $n$ decoupled heat 
equations all with the same diffusion coefficient $\frac1{c^*}$.
Mimicking the calculations presented in Section~\ref{sec:estimates}, one shows that whatever $\lambda \in [0,1]$, 
any corresponding solution $\left(\bbu^{(\lambda)}, \bbJ^{(\lambda)}\right)$ lies in $\cA^\cT \times (\cV_0)^\cE$, 
and $\bbu^{(\lambda)}$ is positive. Moreover, the entropy - entropy dissipation estimate and the 
uniform bound~\eqref{eq:E.bound} on the entropy ensure that 
$$
\left\| \bbJ\laurent{^{(\lambda)}}\right\|_\cE^2 \leq \frac{2m_\Omega \log
  n}{c^*\laurent{\Delta t_p}} =: {K}.
$$
where 
$
\left\| \bbJ\laurent{^{(\lambda)}}\right\|_\cE^2 =  \sum_{\sigma=K|L\in\cE_{\rm int}} m_\sigma d_\sigma |J_{K\sigma}\laurent{^{(\lambda)}}|^2.
$
Fixing $\eta>0$, we define the relatively compact open sets
$$
\cA_\eta^\cT =\left\{ \bbu \in (\bR^\cT)^n \;\middle|\; \mathop{\inf}_{\bbv \in \cA^\cT} \|\bbu - \bbv\| < \eta \right\}
$$
and 
$$
(\cV_0)^\cE_{\eta} =\left\{ \bbJ \in (\bR^\cE)^n \;\middle|\; 
\left\| \bbJ\right\|_\cE^2
< K^{1/2} + \eta \quad \text{and}\quad
 \mathop{\inf}_{\bbF \in (\cV_0)^\cE} \|\bbJ - \bbF\| < \eta \right\}.
$$
The {\it a priori} estimates ensure that no solution $\left(\bbu^{(\lambda)}, \bbJ^{(\lambda)}\right)$ of the modified scheme 
can cross the boundary of the open set $\cA_\eta^\cT \times (\cV_0)^\cE_{\eta}$. The topological degree associated to the 
modified scheme and $\cA_\eta^\cT \times (\cV_0)^\cE_{\eta}$ is constant with respect to $\lambda$, and takes the value $+1$ 
for $\lambda = 0$ since the system is linear and invertible with positive determinant. So it is also equal to $1$ for $\lambda = 1$, 
ensuring the existence of a solution to the nonlinear problem~\eqref{eq:scheme}.
\end{proof}

The proof of Theorem~\ref{th:discrete} is now complete.

\section{Proof of Theorem~\ref{th:convergence}}\label{sec:conv}

We consider here a sequence $\left( \cT_m, \cE_m, (x_K)_{K\in \cT_m}\right)_{m\geq 1}$ of admissible space discretizations with $h_{\cT_m}$ going to $0$ as $m$ tends to $+\infty$, 
while the regularity $\zeta_{\cT_m}$ remains uniformly bounded from below by a positive constant $\zeta^*$.
We also consider a sequence  $(\boldsymbol{\Delta t}_m)_{m\geq 1}=\left((\Delta t_{p,m})_{1\leq p \leq P_{T,m}}\right)_{m\geq 1}$ of admissible time discretizations such that $h_{T,m}$ goes to $0$ as $m$ goes to infinity.

From the discrete solutions $\left( \bbu_m , \bbJ_m\right)$, $m\geq 1$, 
the existence of which being guaranteed by Theorem~\ref{th:discrete}, we 
reconstruct the piecewise constant functions $u_{\cT_m, \boldsymbol{\Delta t}_m} \in L^\infty(Q_T;\cA)$ and 
$J_{\cE_m, \boldsymbol{\Delta t}_m} \in L^2(Q_T;\cV_0)^d$ thanks to formulas~\eqref{eq:approx.u} and \eqref{eq:approx.J}.
In the convergence analysis, we also need the weakly consistent piecewise constant gradient reconstruction operators
$\nabla_{\cE_m}$ and $\nabla_{\cE_m,\boldsymbol{\Delta t}_m}$ defined for $m \geq 1$ and  $\bbv \in \bR^{\cT_m}$
\begin{equation}\label{eq:nabla_E.1}
\nabla_{\cE_m} \bbv (x) = d D_{K\sigma} \bbv_m n_{K\sigma}\quad \text{if}\; x \in \Delta_\sigma,\; \sigma \in \cE_m, 
\end{equation}
and, for   $\bbv  = \left(\bbv^p\right)_{0\leq p \leq P_{T,m}} \in \bR^{(1+P_{T,m}) \times \cT_m}$, 
\begin{equation}\label{eq:nabla_E.2}
\nabla_{\cE_m, \boldsymbol{\Delta t}_m} \bbv_m (t,\cdot) = \nabla_{\cE_m} \bbv^p \quad \text{if}\; t\in (t_{p-1},t_p], 
\; 1\leq p \leq P_{T,m}.
\end{equation}

\subsection{Compactness on approximate reconstructions}

The next proposition is the main result of this section.

\begin{prop}\label{prop:compact}
There exists $u\in L^\infty(Q_T;\cA^\cT) \cap L^2(0,T;H^1(\Omega))^n$ with $\sqrt{u} \in L^2(0,T;H^1(\Omega))^n$, 
and $J \in L^2(Q_T; (\cV_0)^d)$ such that, up to a subsequence, the following convergence properties hold:
\begin{align}\label{eq:compact.u}
u_{\cT_m, \boldsymbol{\Delta t}_m} \underset{m\to+\infty}\longrightarrow & u \quad \text{a.e. in } Q_T, 
\\
\label{eq:compact.gradsqrtu}
\nabla_{\cE_m, \boldsymbol{\Delta t}_m} \sqrt{\bbu_m}  \underset{m\to+\infty}\longrightarrow & \nabla \sqrt{u} 
\quad \text{weakly in } L^2(Q_T)^{n\times d}, 
\\
\label{eq:compact.gradu}
\nabla_{\cE_m, \boldsymbol{\Delta t}_m} {\bbu_m}  \underset{m\to+\infty}\longrightarrow & \nabla {u} 
\quad \text{weakly in } L^2(Q_T)^{n\times d}, 
\\
\label{eq:compact.J}	
J_{\cE_m, \boldsymbol{\Delta t}_m}  \underset{m\to+\infty}\longrightarrow & J
\quad \text{weakly in } L^2(Q_T)^{n\times d}.
\end{align}
\end{prop}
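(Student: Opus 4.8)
My plan is to derive all four convergences from the single \emph{a priori} bound furnished by the discrete entropy--entropy dissipation inequality, and to upgrade the weak convergence of $u$ to a strong one through translation estimates feeding a discrete Aubin--Lions (Kolmogorov) argument. \emph{Step 1 (a summable energy estimate).} I would first sum the inequality~\eqref{eq:discent} of Lemma~\ref{lem:entropy} over $p=1,\dots,P_{T,m}$. The entropy terms telescope, and the two-sided bound~\eqref{eq:E.bound} gives, uniformly in $m$,
\[
\sum_{p=1}^{P_{T,m}}\Delta t_p\!\!\sum_{\sigma=K|L\in\cE_{\rm int}}\!\!\left(\frac{c^*}2 m_\sigma d_\sigma\,|J_{K\sigma}^p|^2+\frac{\alpha}2\,\tau_\sigma\,\big|D_{K\sigma}\sqrt{\bbu^p}\big|^2\right)\le E_\cT(\bbu^0)-E_\cT(\bbu^{P_{T,m}})\le m_\Omega\log n.
\]
Using the measure identity~\eqref{eq:mDelta_sig} together with the reconstruction formulas~\eqref{eq:approx.J} and \eqref{eq:nabla_E.1}--\eqref{eq:nabla_E.2}, the two summands become (up to fixed constants) the squared $L^2(Q_T)$ norms of $J_{\cE_m,\boldsymbol{\Delta t}_m}$ and of $\nabla_{\cE_m,\boldsymbol{\Delta t}_m}\sqrt{\bbu_m}$, which are therefore bounded uniformly in $m$. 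Moreover, since $u_{\cT_m,\boldsymbol{\Delta t}_m}$ takes values in $\cA\subset[0,1]^n$, the elementary discrete bound $|D_{K\sigma}\bbu_i^p|\le(\sqrt{u_{i,K}^p}+\sqrt{u_{i,K\sigma}^p})\,|D_{K\sigma}\sqrt{\bbu_i^p}|\le 2\,|D_{K\sigma}\sqrt{\bbu_i^p}|$ (the discrete analogue of the inequality in Remark~\ref{rmk:regularity}) transfers this into a uniform $L^2(Q_T)$ bound on $\nabla_{\cE_m,\boldsymbol{\Delta t}_m}\bbu_m$ as well.

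\emph{Step 2 (weak limits).} The three sequences $J_{\cE_m,\boldsymbol{\Delta t}_m}$, $\nabla_{\cE_m,\boldsymbol{\Delta t}_m}\sqrt{\bbu_m}$ and $\nabla_{\cE_m,\boldsymbol{\Delta t}_m}\bbu_m$ being bounded in $L^2(Q_T)^{n\times d}$, I extract a common subsequence along which each converges weakly in $L^2$, to limits $J$, $G$ and $H$ respectively. Since $u_{\cT_m,\boldsymbol{\Delta t}_m}$ is bounded in $L^\infty(Q_T)^n$, it has a weak-$\star$ limit as well; the real work is to promote this to strong convergence, which is what allows the identifications $G=\nabla\sqrt u$ and $H=\nabla u$.

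\emph{Step 3 (strong compactness of $u$, the crux).} For the space translations I would invoke the uniform discrete $H^1$ bound on $\bbu_m$ from Step~1, which by the classical finite-volume translation estimate controls $\|u_{\cT_m,\boldsymbol{\Delta t}_m}(\cdot,\cdot+\xi)-u_{\cT_m,\boldsymbol{\Delta t}_m}\|_{L^2(Q_T)}$ by a quantity vanishing as $|\xi|+h_{\cT_m}\to 0$. For the time translations I would test the discrete conservation law~\eqref{eq:cons} against time increments of $u$: writing $\bbu^p-\bbu^{p-1}$ as a discrete divergence of the fluxes and using the uniform $L^2$ bound on $\bbJ_m$, one controls the time translates of $u_{\cT_m,\boldsymbol{\Delta t}_m}$ in a weak (dual) norm by a quantity vanishing with the translation step. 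Combining the two through a discrete Aubin--Lions argument (equivalently, the Kolmogorov--Riesz--Fr\'echet criterion) yields relative compactness in $L^2(Q_T)^n$, hence, after a further extraction, $u_{\cT_m,\boldsymbol{\Delta t}_m}\to u$ in $L^2(Q_T)^n$ and a.e., which is~\eqref{eq:compact.u}. I expect the delicate interplay between the spatial $H^1$ control (available directly only through the $\sqrt u$ term and transferred to $u$ by the $u\le 1$ trick of Step~1) and the weak-norm time estimate to be the technically demanding point of the whole argument.

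\emph{Step 4 (identification of the limits).} Since each $u_{\cT_m,\boldsymbol{\Delta t}_m}(t,x)\in\cA$ and $\cA$ is closed and convex, the a.e. limit satisfies $u\in L^\infty(Q_T;\cA)$. By continuity of $x\mapsto\sqrt x$ and dominated convergence, $\sqrt{u_{\cT_m,\boldsymbol{\Delta t}_m}}\to\sqrt u$ in $L^2(Q_T)^n$; the weak consistency of the reconstructed gradient operator (the discrete Green formula, in which the boundary contribution drops out owing to the mirror/no-flux values) then forces $G=\nabla\sqrt u$, giving~\eqref{eq:compact.gradsqrtu} and $\sqrt u\in L^2(0,T;H^1(\Omega))^n$. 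The same argument applied to $u_{\cT_m,\boldsymbol{\Delta t}_m}\to u$ identifies $H=\nabla u$, yielding~\eqref{eq:compact.gradu} and $u\in L^2(0,T;H^1(\Omega))^n$. Finally, $J_{\cE_m,\boldsymbol{\Delta t}_m}$ takes values in the fixed linear subspace $(\cV_0)^d$, which is weakly closed, so $J\in L^2(Q_T;(\cV_0)^d)$, which is~\eqref{eq:compact.J}. A diagonal extraction guarantees that all these convergences hold along one and the same subsequence.
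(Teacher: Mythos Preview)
Your proposal is correct and follows the same overall strategy as the paper: sum the entropy inequality~\eqref{eq:discent}, convert to uniform $L^2(Q_T)$ bounds on $J_{\cE_m,\boldsymbol{\Delta t}_m}$ and $\nabla_{\cE_m,\boldsymbol{\Delta t}_m}\sqrt{\bbu_m}$, transfer the latter to $\nabla_{\cE_m,\boldsymbol{\Delta t}_m}\bbu_m$ via the same $|D_\sigma\bbu|\le 2|D_\sigma\sqrt{\bbu}|$ trick, and obtain time compactness by testing~\eqref{eq:cons} and using the $L^2$ flux bound.

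The one genuine difference lies in how the strong compactness and gradient identification are packaged. The paper invokes the \emph{nonlinear} discrete Aubin--Simon lemma of \cite[Theorem~3.9]{ACM17}: once the discrete $L^2(0,T;H^{-1})$ bound on the time increments is established (exactly as you outline in Step~3), that single result delivers~\eqref{eq:compact.u}, \eqref{eq:compact.gradsqrtu} and \eqref{eq:compact.gradu} simultaneously, including the identification of the weak gradient limits with $\nabla\sqrt u$ and $\nabla u$. You instead propose the more classical two-stage route: Kolmogorov--Riesz via space and time translation estimates to get $u_{\cT_m,\boldsymbol{\Delta t}_m}\to u$ strongly, followed by a separate identification of the gradient limits through the discrete Green formula and weak consistency of $\nabla_{\cE_m}$. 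Both work; your approach is more self-contained and does not rely on the specific machinery of \cite{ACM17}, while the paper's citation is shorter and handles the nonlinear composition $\sqrt{\cdot}$ without the extra dominated-convergence step you need in Step~4.
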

\begin{proof}
Summing~\eqref{eq:discrete.entro} over $p  \in \{1,\dots, P_{T,m}\}$ and using the bound~\eqref{eq:E.bound} on $E_\cT$ 
provides
\begin{equation}\label{eq:compact.1}
\sum_{p=1}^{P_{T,m}} \Delta t_p \sum_{\sigma \in \cE_{{\rm int},m}} \left(
\frac{\alpha}2 \tau_\sigma \left| D_\sigma \sqrt{\bbu_m} \right|^2 + \frac{c^*}2 m_\sigma d_\sigma \left| J_{K\sigma} \right|^2 \right)
\leq m_\Omega \log n.
\end{equation}
Recalling the elementary geometrical relation $d m_{\Delta_\sigma} = m_\sigma d_\sigma$ and the definitions~\eqref{eq:approx.J} 
of $J_{\cE_m, \boldsymbol{\Delta t}_m}$ and~\eqref{eq:nabla_E.1}-\eqref{eq:nabla_E.2}, one obtains that 
\begin{equation}\label{eq:compact.2}
\left\| J_{\cE_m, \boldsymbol{\Delta t}_m} \right\|_{L^2(Q_T)^{n\times d}}  + \left\| 
\nabla_{\cE_m, \boldsymbol{\Delta t}_m} \sqrt{\bbu_m} \right\|_{L^2(Q_T)^{n\times d}} \leq C
\end{equation}
for some $C$ not depending on $m$.
As a straightforward consequence, there exists $J, F \in L^2(Q_T)^{n\times d}$ such that~\eqref{eq:compact.J} holds, as well as 
\begin{equation}\label{eq:compact.3}
\nabla_{\cE_m, \boldsymbol{\Delta t}_m} \sqrt{\bbu_m}  \underset{m\to+\infty}\longrightarrow  F
\quad \text{weakly in } L^2(Q_T)^{n\times d}.
\end{equation}
The fact that $J \in  L^2(Q_T; \cV_0)^d$ results from the stability of linear space $\cV_0$ for the 
weak convergence. 
Moreover, since $0\leq u_K^n \leq 1$, then $D_{\sigma}\bbu_m^p \leq 2 D_{\sigma}\sqrt{\bbu_m^p}$ 
for all $\sigma \in \cE_{{\rm int},m}$ and all $1 \leq p \leq P_{T,m}$.
Therefore, we deduce from~\eqref{eq:compact.2} that 
$$
\left\| \nabla_{\cE_m, \boldsymbol{\Delta t}_m} {\bbu_m} \right\|_{L^2(Q_T)^{n\times d}} \leq C, 
$$
whence the existence of some $G \in L^2(Q_T)$ such that 
\begin{equation}\label{eq:compact.4}
\nabla_{\cE_m, \boldsymbol{\Delta t}_m} {\bbu_m}  \underset{m\to+\infty}\longrightarrow  G
\quad \text{weakly in } L^2(Q_T)^{n\times d}.
\end{equation}
On the other hand, $u_{\cT_m,  \boldsymbol{\Delta t}_m}$ belongs to the bounded subset 
$L^\infty(Q_T;\cA)$ of $L^\infty(Q_T)^n$ for all $m \geq 1$. Therefore, up to a subsequence, 
 $u_{\cT_m,  \boldsymbol{\Delta t}_m}$ converges in the $L^\infty(Q_T)^n$-weak star sense 
 towards some $u$, which takes its values in $\cA$ since both the positivity and the sum to 1 property 
 are stable when passing to the limit in this topology. 
 
To conclude this proof, it remains to check that the convergence of  $u_{\cT_m,  \boldsymbol{\Delta t}_m}$ 
towards $u$ holds point-wise, and to identify $F$ and $G$ as $\nabla \sqrt{u}$ and $\nabla u$ respectively. 
These properties are provided all at once by the nonlinear discrete Aubin-Simon lemma \cite[Theorem 3.9]{ACM17}. 
As already established in~\cite{ACM17}, this theorem applies naturally in the TPFA finite volume context. The only 
point to be checked is a discrete $L^2(0,T;H^{-1}(\Omega))$ estimate on the time increments of 
$u_{\cT_m,  \boldsymbol{\Delta t}_m}$.
More precisely, for $\phi \in C^\infty_c((0,T) \times \Omega;\bR^n)$, one defines 
$\bbphi = \left( \phi_{i,K}^p \right) \in \bR^{n\times P_{T,m} \times \cT_m}$ by 
$$
\phi_{i,K}^p = \frac{1}{\Delta t_p \, m_K} \int_{t_{p-1}}^{t_p} \int_K \phi_i(t,x) {\rm d}x {\rm d}t.
$$
It follows from 
\eqref{eq:cons}-\eqref{eq:fluxcond1}-\eqref{eq:fluxcond2} that 
$$
\sum_{p = 1}^{P_{T,m}} \sum_{K\in\cT_m} m_K \langle (u_K^p - u_{K}^{p-1}), \phi_K^p\rangle
= \sum_{p = 1}^{P_{T,m}} \Delta t_p \sum_{\sigma \in \cE_{{\rm int}, m}} m_\sigma \left \langle J_{K\sigma}^p, 
D_{K\sigma} \bbphi^p \right \rangle.
$$
Applying Cauchy-Schwarz inequality leads to 
\begin{multline*}
\sum_{p = 1}^{P_{T,m}} \sum_{K\in\cT_m} m_K \langle (u_K^p - u_{K}^{p-1}), \phi_K^p\rangle\\
\leq \left(\sum_{p = 1}^{P_{T,m}} \Delta t_p \sum_{\sigma \in \cE_{{\rm int}, m}} m_\sigma d_\sigma \left|J_{K\sigma}^p\right|^2 \right)^{1/2}
\left(\sum_{p = 1}^{P_{T,m}} \Delta t_p \sum_{\sigma \in \cE_{{\rm int}, m}} \tau_\sigma \left| D_\sigma \bbphi^p \right|^2 \right)^{1/2}.
\end{multline*}
The discrete $L^2(Q_T)^d$ estimate on the fluxes~\eqref{eq:compact.1} shows that the first term in 
the righthand side is bounded, whereas the second term is the discrete $L^2(0,T;H^1(\Omega))$ semi-norm of $\bbphi$.
A straightforward generalisation of \cite[Lemma 9.4]{eymard2000finite} shows that 
$$
\sum_{p = 1}^{P_{T,m}} \Delta t_p \sum_{\sigma \in \cE_{{\rm int}, m}} \tau_\sigma \left| D_\sigma \bbphi^p \right|^2
\leq C \|\nabla\phi\|_{L^2(Q_T)^d}^2
$$
for some $C$ only depending on the regularity factor $\zeta^*$.
Therefore, 
$$
\sum_{p = 1}^{P_{T,m}} \sum_{K\in\cT_m} m_K \langle (u_K^p - u_{K}^{p-1}), \phi_K^p\rangle 
\leq C \|\nabla\phi\|_{L^2(Q_T)^d}\leq C  \|\nabla\phi\|_{L^\infty(Q_T)^d}, 
$$
which is exactly the condition required to apply \cite[Theorem 3.9]{ACM17}, which provides~\eqref{eq:compact.u}-\eqref{eq:compact.gradsqrtu}-\eqref{eq:compact.gradu} all at once, concluding the proof of Proposition~\ref{prop:compact}.
\end{proof}

For all $m\geq 1$, we introduce the diamond cell based reconstruction $u_{\cE_m, \boldsymbol{\Delta t}_m}$ 
of the volume fractions defined by 
$$
u_{\cE_m, \boldsymbol{\Delta t}_m} (t,x) = u_\sigma^p \quad \text{if}\; (t,x) \in (t_{p-1}, t_p] \times \Delta_\sigma, 
\sigma \in \cE_{m},\; 1 \leq p \leq P_{T,m}, 
$$
where the $u_\sigma^p$ are given by~\eqref{eq:u_isig}. The following lemma shows that both reconstructions 
$u_{\cE_m, \boldsymbol{\Delta t}_m}$ and $u_{\cT_m, \boldsymbol{\Delta t}_m}$ share the same limit $u$. 
The proof is omitted there since it is similar to the one of~\cite[Lemma 4.4]{cances2020convergent}.
\begin{lem}\label{lem:conv_usig}
Let $u$ be as in Proposition~\ref{prop:compact} then, up to a subsequence,  
$u_{\cE_m, \boldsymbol{\Delta t}_m}$ converges in $L^r(Q_T)$, $1 \leq r < +\infty$ towards $u$ as $m$ tends to $+\infty$.
\end{lem}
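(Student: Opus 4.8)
The plan is to prove that the diamond-based reconstruction $u_{\cE_m, \boldsymbol{\Delta t}_m}$ stays $L^2(Q_T)^n$-close to the cell-based reconstruction $u_{\cT_m, \boldsymbol{\Delta t}_m}$, with a gap that is $O(h_{\cT_m})$, and then to transfer the convergence~\eqref{eq:compact.u} from the latter to the former.

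First I would exploit that the logarithmic mean~\eqref{eq:u_isig} lies between the two values it averages: for every interior edge $\sigma = K|L$ and every species $i$ one has $\min(u_{i,K}^p, u_{i,L}^p) \leq u_{i,\sigma}^p \leq \max(u_{i,K}^p, u_{i,L}^p)$, while $u_{i,\sigma}^p = u_{i,K}^p$ on exterior edges by definition of the mirror value. Consequently every edge value lies in $[0,1]$, so both reconstructions are uniformly bounded (with values in $[0,1]^n$), and on each half-diamond $\Delta_{K\sigma}$ the pointwise discrepancy is controlled by the jump, namely $|u_{i,\sigma}^p - u_{i,K}^p| \leq D_\sigma \bbu_{i,m}^p$.

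Next I would estimate the $L^2$ distance. Since the difference vanishes on exterior diamonds and is bounded by $|D_\sigma \bbu_m^p|$ on each interior diamond $\Delta_\sigma$, the geometric identity $m_{\Delta_\sigma} = m_\sigma d_\sigma / d$ from~\eqref{eq:mDelta_sig}, together with the elementary bound $d_\sigma \leq 2 h_{\cT_m}$, yields
$$
\left\| u_{\cE_m, \boldsymbol{\Delta t}_m} - u_{\cT_m, \boldsymbol{\Delta t}_m} \right\|_{L^2(Q_T)^n}^2
\leq \frac1d \sum_{p=1}^{P_{T,m}} \Delta t_p \sum_{\sigma \in \cE_{{\rm int},m}} d_\sigma^2\, \tau_\sigma \left| D_\sigma \bbu_m^p \right|^2
\leq \frac{4 h_{\cT_m}^2}{d} \sum_{p=1}^{P_{T,m}} \Delta t_p \sum_{\sigma \in \cE_{{\rm int},m}} \tau_\sigma \left| D_\sigma \bbu_m^p \right|^2 .
$$
The last sum is the squared discrete $L^2(0,T;H^1(\Omega))$ seminorm of $\bbu_m$; it is bounded uniformly in $m$ because $D_\sigma \bbu_m^p \leq 2 D_\sigma \sqrt{\bbu_m^p}$ (as $u \leq 1$), and the entropy estimate~\eqref{eq:compact.1} controls $\sum_p \Delta t_p \sum_\sigma \tau_\sigma |D_\sigma \sqrt{\bbu_m^p}|^2$. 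Hence the right-hand side is $O(h_{\cT_m}^2)$ and tends to $0$.

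Finally, the a.e.\ convergence~\eqref{eq:compact.u} and the uniform bound $u_{\cT_m, \boldsymbol{\Delta t}_m} \in [0,1]^n$ give, by dominated convergence, $u_{\cT_m, \boldsymbol{\Delta t}_m} \to u$ in $L^r(Q_T)^n$ for every finite $r$; the $L^2$ gap just established upgrades to an $L^r$ gap for all $1 \leq r < \infty$ thanks to the uniform $L^\infty$ bound (interpolation for $r \geq 2$, inclusion on the bounded set $Q_T$ for $r \leq 2$). The triangle inequality then delivers $u_{\cE_m, \boldsymbol{\Delta t}_m} \to u$ in $L^r(Q_T)^n$ along the subsequence already extracted in Proposition~\ref{prop:compact}. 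The only delicate point is the jump control of the edge reconstruction in the third paragraph; once the logarithmic mean is sandwiched between neighbouring cell values, everything else follows from the entropy bound and Proposition~\ref{prop:compact}.
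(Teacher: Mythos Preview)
Your argument is correct and is precisely the standard route: sandwich the logarithmic mean between the two cell values, bound the diamond--cell discrepancy by the jump, and turn the resulting discrete $H^1$-seminorm into an $O(h_{\cT_m})$ estimate via the entropy bound and~\eqref{eq:compact.1}. The paper does not give its own proof of this lemma but defers to~\cite[Lemma~4.4]{cances2020convergent}, whose argument is exactly the one you have written out; nothing further is needed.
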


\subsection{Convergence towards a weak solution}\label{ssec:identify}

Our last statement to conclude the proof of Theorem~\ref{th:convergence} consists in identifying the limit values 
$(u,J)$ of the approximate solutions as weak solutions to the Stefan-Maxwell cross-diffusion system.

\begin{proposition}\label{prop:identify}
 Let $(u,J)$ be as in Proposition~\ref{prop:compact} then $(u,J)$ is a weak solution to (\ref{eq:masstrans})-(\ref{eq:SM1comp})-(\ref{eq:SM2comp}) in the sense of Definition~\ref{def:defsol}. 
\end{proposition}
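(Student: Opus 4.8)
The plan is to verify that the limit pair $(u,J)$ produced in Proposition~\ref{prop:compact} meets every requirement of Definition~\ref{def:defsol}. The regularity $u \in L^\infty(Q_T;\cA) \cap L^2(0,T;H^1(\Omega))^n$, $\nabla\sqrt u \in L^2(Q_T)^{n\times d}$ and $J \in L^2(Q_T;(\cV_0)^d)$ is already granted by Proposition~\ref{prop:compact}, so only two identities remain to be established: the constitutive flux law \eqref{eq:SM1comp-2} for a.e.\ $(t,x) \in Q_T$, and the weak formulation \eqref{eq:weak.u} of the conservation law for every $\phi \in \cC^\infty_c([0,T)\times\overline\Omega)^n$. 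I would obtain each of them by passing to the limit $m \to +\infty$ in the corresponding discrete relation, relying on the compactness from Proposition~\ref{prop:compact} and Lemma~\ref{lem:conv_usig}.

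For the weak formulation, I fix $\phi$ and set $\phi_{i,K}^p$ to be its cell-and-time average as in the proof of Proposition~\ref{prop:compact}. Multiplying \eqref{eq:cons} by $\Delta t_p\,\phi_{i,K}^p$ and summing over $i$, $K$ and $p$, I would perform a discrete integration by parts in time (Abel summation) on the accumulation term and a discrete integration by parts in space on the flux term, using the conservativity \eqref{eq:fluxcond1} and the no-flux condition \eqref{eq:fluxcond2} exactly as in the discrete $H^{-1}$ estimate already carried out. The time term then splits into a contribution converging to $\iint_{Q_T}\langle u,\partial_t\phi\rangle$ and a boundary contribution converging to $\int_\Omega\langle u^0,\phi(0,\cdot)\rangle$, thanks to the pointwise convergence \eqref{eq:compact.u}, the smoothness of $\phi$, and the convergence of $\bbu^0$ towards $u^0$. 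The flux term is a pairing of $J_{\cE_m,\boldsymbol{\Delta t}_m}$ against the discrete gradient of $\bbphi$; since the latter converges strongly in $L^2(Q_T)^{n\times d}$ to $\nabla\phi$ (weak consistency of TPFA gradients, as in \cite[Lemma 9.4]{eymard2000finite}) and $J_{\cE_m,\boldsymbol{\Delta t}_m}\rightharpoonup J$ weakly by \eqref{eq:compact.J}, this term converges to $\iint_{Q_T}\sum_i J_i\cdot\nabla\phi_i$. Collecting these limits yields \eqref{eq:weak.u}.

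For the constitutive law, I would first rewrite \eqref{eq:SM1comp-2-disc} as an a.e.\ identity between piecewise-constant reconstructions on the diamond cells: multiplying by $d\,n_{K\sigma}$ and recalling the definitions of $\nabla_{\cE_m,\boldsymbol{\Delta t}_m}$, of $J_{\cE_m,\boldsymbol{\Delta t}_m}$, and of the diamond reconstruction $u_{\cE_m,\boldsymbol{\Delta t}_m}$, the scheme reads
\begin{equation*}
\nabla_{\cE_m,\boldsymbol{\Delta t}_m}\bbu_m + c^*\,J_{\cE_m,\boldsymbol{\Delta t}_m} + \overline A\!\left(u_{\cE_m,\boldsymbol{\Delta t}_m}\right) J_{\cE_m,\boldsymbol{\Delta t}_m} = 0
\end{equation*}
on every interior diamond, the exterior diamonds forming a boundary layer of vanishing measure. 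The first two terms pass to the limit by the weak convergences \eqref{eq:compact.gradu} and \eqref{eq:compact.J}. For the nonlinear term I would invoke Lemma~\ref{lem:conv_usig}: $u_{\cE_m,\boldsymbol{\Delta t}_m}\to u$ strongly in $L^r(Q_T)$, hence, $\overline A$ being a fixed continuous matrix-valued map of an argument bounded in $[0,1]$, $\overline A(u_{\cE_m,\boldsymbol{\Delta t}_m})\to\overline A(u)$ strongly in $L^r(Q_T)$ for every finite $r$ while staying bounded in $L^\infty$. The product of this strongly convergent sequence with the weakly convergent $J_{\cE_m,\boldsymbol{\Delta t}_m}$ then converges (weakly, e.g.\ tested against $\cC^\infty_c$) to $\overline A(u)J$, giving \eqref{eq:SM1comp-2} a.e.\ in $Q_T$. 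Since $u\in\cA$ and $J\in\cV_0$ a.e., identity \eqref{eq:newid} shows this is equivalent to \eqref{eq:SM1comp}.

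I expect the passage to the limit in the nonlinear product $\overline A(u_{\cE_m,\boldsymbol{\Delta t}_m})J_{\cE_m,\boldsymbol{\Delta t}_m}$ to be the \emph{crux}, because $J$ is only available as a weak limit; it is precisely the strong convergence of the edge reconstruction $u_{\cE_m,\boldsymbol{\Delta t}_m}$ supplied by Lemma~\ref{lem:conv_usig} (itself resting on the a.e.\ convergence \eqref{eq:compact.u} and on matching the two reconstructions of $u$) that makes the strong$\times$weak product pass to the limit. The remaining points --- weak consistency of the discrete test-function gradient and the identification of the weak limits $F$, $G$ with $\nabla\sqrt u$ and $\nabla u$ --- are already settled in Proposition~\ref{prop:compact}, and the vanishing of the exterior diamond layer is routine.
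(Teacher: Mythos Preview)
Your proposal follows the same route as the paper: for the constitutive law you rewrite \eqref{eq:SM1comp-2-disc} as an identity between diamond reconstructions and pass to the limit using Lemma~\ref{lem:conv_usig} for the strong convergence of $\overline A(u_{\cE_m,\boldsymbol{\Delta t}_m})$, and for the conservation law you multiply \eqref{eq:cons} by discretised test-function values and perform discrete integrations by parts. Your identification of the crux (the strong$\times$weak product $\overline A(u_{\cE_m,\boldsymbol{\Delta t}_m})J_{\cE_m,\boldsymbol{\Delta t}_m}$) is correct and matches the paper.

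There is, however, one incorrect step in your handling of the flux term in~\eqref{eq:weak.u}. You assert that $\nabla_{\cE_m,\boldsymbol{\Delta t}_m}\bbphi$ converges \emph{strongly} in $L^2(Q_T)^{n\times d}$ to $\nabla\phi$, citing ``weak consistency'' and \cite[Lemma~9.4]{eymard2000finite}. This is false: the diamond gradient~\eqref{eq:nabla_E.1} only records the normal component, so on $\Delta_\sigma$ it approximates $d\,(\nabla\phi\cdot n_{K\sigma})\,n_{K\sigma}$ rather than $\nabla\phi$, and therefore converges to $\nabla\phi$ only weakly even for smooth $\phi$; the cited lemma merely bounds the discrete $H^1$ seminorm. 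Hence your weak--strong product argument for this term does not apply as written. The paper proceeds differently: it uses point values $\phi_K^p=\phi(t_p,x_K)$, subtracts $\iint_{Q_T} J_{i,\cE_m,\boldsymbol{\Delta t}_m}\cdot\nabla\phi$ from the discrete flux sum to obtain an explicit remainder $R_{3,m}$, and shows $|R_{3,m}|\le C(h_{\cT_m}+h_{T_m})\|J_{\cE_m,\boldsymbol{\Delta t}_m}\|_{L^1(Q_T)}\to 0$ thanks to the orthogonality condition (iii) of Definition~\ref{def:mesh}, which gives $\bigl|\tfrac{1}{d_\sigma}D_{K\sigma}\bbphi - \nabla\phi\cdot n_{K\sigma}\bigr|=O(h_{\cT_m})$. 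The passage $\iint J_{\cE_m}\cdot\nabla\phi\to\iint J\cdot\nabla\phi$ then follows from~\eqref{eq:compact.J} against the fixed test function $\nabla\phi$. This repair is straightforward, but the argument you wrote would fail.
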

\begin{proof}
One has already established in Proposition~\ref{prop:compact} that the limit values $(u,J)$ lie in the right functional spaces. 
It only remains to check that~\eqref{eq:masstrans}, \eqref{eq:no-flux} and \eqref{eq:SM1comp-2} hold in the distributional sense. 

Equation~\eqref{eq:SM1comp-2-disc} implies that 
\begin{equation}\label{eq:identify.1}
\nabla_{\cE_m, \boldsymbol{\Delta t}_m} \bbu_m + \left(c^* I + \overline A(u_{\cE_m, \boldsymbol{\Delta t}_m})\right) 
J_{\cE_m, \boldsymbol{\Delta t}_m} = 0, \qquad \forall m \geq 1.
\end{equation}
Since $v \mapsto \overline A(v)$ is continuous, it follows from Lemma~\ref{lem:conv_usig} that 
$\overline A(u_{\cE_m, \boldsymbol{\Delta t}_m})$ tends to $A(u)$ in $L^2(Q_T)^{n\times n}$. Then thanks to the 
convergence properties~\eqref{eq:compact.gradu}-\eqref{eq:compact.J}, one can pass to the weak limit in~\eqref{eq:identify.1} 
to recover that~\eqref{eq:SM1comp-2} holds in $L^1(Q_T)^{n\times d}$, thus also in $L^2(Q_T)^{n\times d}$.

Concerning equations \eqref{eq:masstrans} and \eqref{eq:no-flux}, we establish them in the distributional sense~\eqref{eq:weak.u}. 
Let $\phi \in C^\infty_c([0,T) \times \overline \Omega)$, then for $m\geq 1$, define 
$\bbphi_m = \left(\phi_{K}^p\right)_{K\in\cT_m, 1\leq p \leq P_{T,m}}$ by setting $\phi_{K}^p = \phi(t_p, x_K)$.
Multiplying~\eqref{eq:masstrans} by $\Delta t_p \phi_{K}^{p-1}$ for some $1 \leq i \leq N$ and summing 
over $K\in\cT_m$ and $1 \leq p \leq P_{T,m}$ gives after reorganisation that
\begin{multline}\label{eq:R1+R2+R3}
 \iint_{Q_T} u_{i,\cT_m, \boldsymbol{\Delta t}_m} \partial_t \phi + \int_{\Omega} u_i^0 \phi(0,\cdot) 
 +  \iint_{Q_T} J_{i,\cE_m, \boldsymbol{\Delta t}_m} \cdot \nabla \phi \\
 = R_{1,m}(\phi) + R_{2,m}(\phi) + R_{3,m}(\phi), 
\end{multline}
where we have set 
\begin{align*}
R_{1,m}(\phi) = & \sum_{p=1}^{P_{T,m}} \sum_{K\in\cT_m} m_K u_{i,K}^p \left(\phi_K^p- \phi_K^{p-1} - \frac1{m_K} \int_{t_{p-1}}^{t_p} \int_K \partial_t \phi \right),
\\
R_{2,m}(\phi) = & \sum_{K\in\cT_m} m_K u_{i,K}^0 \left( \phi_K^0 - \frac1{m_K} \int_K \phi(0,\cdot)\right),
\\
R_{3,m}(\phi) = &\sum_{p=1}^{P_{T,m}} \Delta t_p  \sum_{\sigma \in\cE_m} m_\sigma d_\sigma J_{i,K\sigma}^p 
\left( \frac1 {d_{\sigma}}D_{K\sigma} \bbphi_m^{p-1} - \frac{1}{m_{\Delta_\sigma}\Delta t_p}\int_{t_{p-1}}^{t_p}
\nabla \phi \cdot n_{K\sigma}\right).
\end{align*}
It follows from the regularity of $\phi$ that 
$$
\left|\phi_K^p- \phi_K^{p-1} - \frac1{m_K} \int_{t_{p-1}}^{t_p} \int_K \partial_t \phi \right| \leq C \Delta t_p 
(h_{\cT_m} + h_{T_m}),
$$
so that, using that $0 \leq u_{i,K}^p \leq 1$, we obtain that 
\begin{equation}\label{eq:R1}
\left| R_{1,m}(\phi) \right| \leq C (h_{\cT_m} + h_{T_m}) \underset{m\to+\infty} \longrightarrow 0. 
\end{equation} 
Similarly, one shows that 
\begin{equation}\label{eq:R2}
\left| R_{2,m}(\phi) \right| \leq C h_{\cT_m}  \underset{m\to+\infty} \longrightarrow 0. 
\end{equation} 
Finally, the orthogonality condition on the mesh, namely point (iii) of Definition~\eqref{def:mesh}, 
ensures that 
$$
\left| \frac1 {d_{\sigma}}D_{K\sigma} \bbphi_m^{p-1} - \frac{1}{m_{\Delta_\sigma}\Delta t_p}\int_{t_{p-1}}^{t_p}
\nabla \phi \cdot n_{K\sigma}\right| \leq C(h_{\cT_m} + h_{T_m}).
$$
Therefore, 
$$
\left|R_{3,m}(\phi)\right| \leq C(h_{\cT_m} + h_{T_m}) 
\left\| J_{i,\cE_m, \boldsymbol{\Delta t}_m} \right\|_{L^1(Q_T)^d} \underset{m\to+\infty} \longrightarrow 0
$$
since $\left\| J_{i,\cE_m, \boldsymbol{\Delta t}_m}
\right\|_{L^1(Q_T)^d}$ can be controlled thanks to the Cauchy-Schwarz inequality 
by 
$T^{1/2} m_\Omega^{1/2} \left\| J_{i,\cE_m, \boldsymbol{\Delta t}_m} \right\|_{L^2(Q_T)^d}$ which is bounded 
thanks to~\eqref{eq:compact.2}. 
Then in view of the convergence in $L^1(Q_T)$ of $u_{i,\cT_m,  \boldsymbol{\Delta t}_m}$ towards $u_i$ and 
of the weak convergence in $L^2(Q_T)^d$ of $J_{i,\cE_m, \boldsymbol{\Delta t}_m}$ towards $J_i$, one can 
pass to the limit in~\eqref{eq:R1+R2+R3} to recover that 
$$
 \iint_{Q_T} u_{i} \partial_t \phi + \int_{\Omega} u_i^0 \phi(0,\cdot) 
 +  \iint_{Q_T} J_{i} \cdot \nabla \phi 
 = 0.
$$
The weak formulation~\eqref{eq:weak.u} is then recovered by summing over $i$.
\end{proof}

\section{Numerical results}\label{sec:num}

The aim of this section is to collect some numerical results obtained with the numerical scheme presented in the preceding sections. The numerical scheme has been implemented using Julia and the different codes used to produce the numerical tests presented below 
can be found at~\cite{virginie_ehrlacher_2020_3934286}
(10.5281/zenodo.3934286) . The nonlinear system is solved thanks to a
modified Newton algorithm with stopping criterion $\|\bbu^{p,k+1} -
\bbu^{p,k}\|_{\ell^\infty} < 10^{-12}$ where the superscript $k$
refers to the iteration of the Newton method. 
The obtained solution, denoted by $\bbu^{p -2/3}$ is then projected onto $\mathcal A$ by setting: 
$$
\bbu^{p -1/3}= \max(\bbu^{p -2/3}, 10^{-12}) \mbox{ then } u_{i,K}^p = \frac{u_{i,K}^{p-1/3}}{\sum_{i=1}^n u_{i,K}^{p-1/3}}.
$$

\subsection{Convergence under grid refinement}

We first present some numerical results obtained on a one-dimensional test case, in order to illustrate the rate of convergence of the method with respect to the spatial discretization parameter. 
Here, $\Omega = (0,1)$, and we consider a system composed of three different species ($n=3$).
\clem{Two different initial conditions $u_0$ are considered:
\begin{itemize}
\item a smooth initial profile defined for $x\in(0,1)$ by
\begin{equation}\label{eq:smooth}
u_1^0(x) =u_2^0(x) =\frac14+\frac14\cos(\pi x);
\end{equation}
\item a non-smooth initial profile defined for $x\in(0,1)$ by
\begin{equation}\label{eq:nonsmooth}
u_1^0(x) = \un_{[3/8, 5/8]}(x), \quad u_2^0(x)  = \un_{(1/8, 3/8)}(x) + \un_{(5/8, 7/8)}(x),
\end{equation}
\end{itemize}
where $\un_E$ denotes the characteristic function of the set $E\subset [0,1]$, and 
where $u_3^0$ is deduced from $u_1^0$ and $u_2^0$ by the relation $u_3^0 = 1-u_1^0 - u_2^0$.}
The time step is chosen to be constant and equal to $\Delta t = 10^{-5}$ and final time as $T =0.5$. 
The spatial mesh is chosen to be a uniform grid of the interval $(0,1)$ containing $N$ subintervals. 

The value of the cross-diffusion coefficients are chosen to be
\[
c_{12} = c_{21} = 0.2, \; c_{13} = c_{31} = 1.0, \; c_{23} = c_{32} = 0.1, \; c^* = 0.1.
\]

Figure~\ref{fig:fig1} illustrates the evolution of the $L^1$ in time and space error of the approximate discrete solution as a function of $N$ (which is computed in comparison with an approximate solution computed on a very fine grid \clem{with $N_{\rm ref}=10^4$ cells}).  

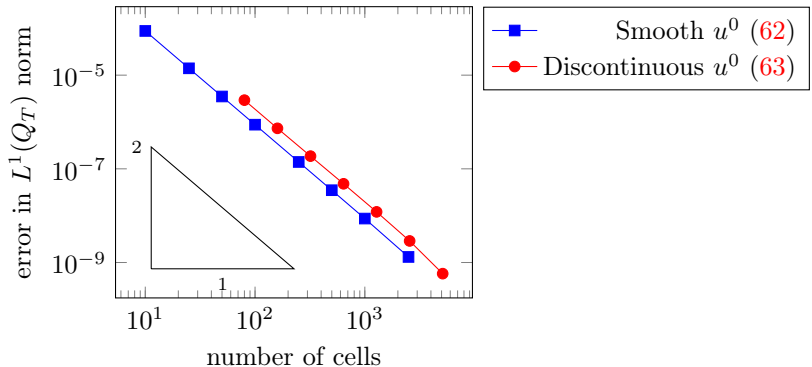
\begin{figure}[h!]
\begin{tikzpicture}
	\begin{loglogaxis}[
	xlabel=number of cells,
	ylabel=error in $L^1(Q_T)$ norm,
	legend style={
		cells={anchor=east},
		legend pos=outer north east,
	},
	width=0.5\linewidth]
	
	\addplot[mark=square*, color=blue] table[x=N, y=errSmooth] {error_new.dat};
	\addlegendentry{Smooth $u^0$~\eqref{eq:smooth}}
	\addplot[mark=*, color=red] table[x=N, y=errNonSmooth] {disc_zero.dat};
	\addlegendentry{Discontinuous $u^0$~\eqref{eq:nonsmooth}}

	\logLogSlopeTriangle{0.1}{-0.4}{0.1}{2}{black};
	\end{loglogaxis}
\end{tikzpicture}
 \caption{Evolution of the $L^1$ space time error of the approximate solution as a function of the spatial discretization parameter. }
\label{fig:fig1}
\end{figure}

We numerically observe that the error decays like $\mathcal O\left( \frac{1}{N^2}\right)$, in other words, 
\clem{showing that the scheme is second order accurate in space}.

\subsection{Two-dimensional test case}

We present here a two-dimensional test case. The number of species is kept to be $n=3$ and the values of the cross-diffusion coefficients are now given by 
\begin{equation}\label{eq:coeffs2D}
c_{12} = c_{21} = 0.1, \; c_{13} = c_{31} = 0.2, \; c_{23} = c_{32} = 2, \; c^* = 0.1.
\end{equation}
The spatial domain $\Omega = (0,1)^2$ 
is discretized using a cartesian uniform grid containing 
$70$ cells in each direction. Time step is chosen to be $\Delta t = 10^{-5}$.

Figure~\ref{fig:fig2} (respectively Figure~\ref{fig:fig3} and Figure~\ref{fig:fig4}) shows the values of the concentration profiles $u_1,u_2,u_3$ at time $t=0$ (respectively $t_1 = 8.5 \; 10^{-5}$ and $t_2 = 1 \; 10^{-3}$). 
\clem{
Since the coefficients $c_{12}$ and $c_{13}$ are much smaller than $c_{23}$, the initial interfaces 
between the different species are easily diffused for early times. Recall that $c_{ij}$ is an inverse diffusion coefficient. 
On Figure~\ref{fig:fig4}, one clearly sees that the species 2 and 3 have difficulties to interdiffuse 
due to the high value of $c_{23}$, so that 
the specie 2 remains essentially confined in a region where $u_3$ is small. 
}
\begin{figure}[htb]
 \includegraphics*[width=5cm]{./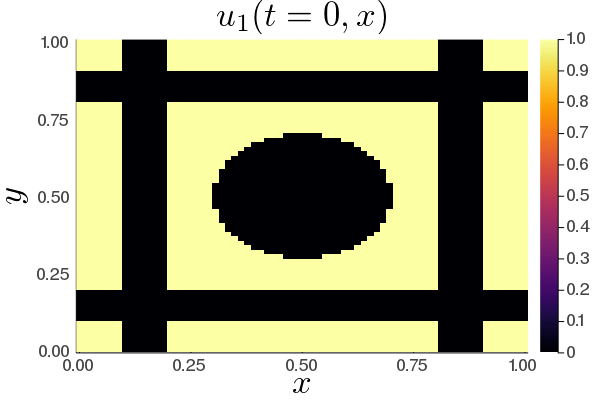}
  \includegraphics*[width=5cm]{./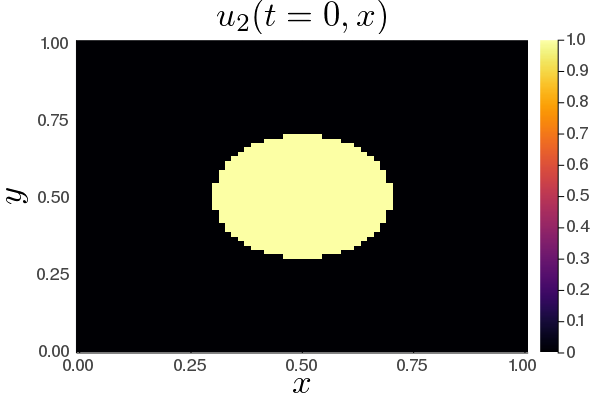}
   \includegraphics*[width=5cm]{./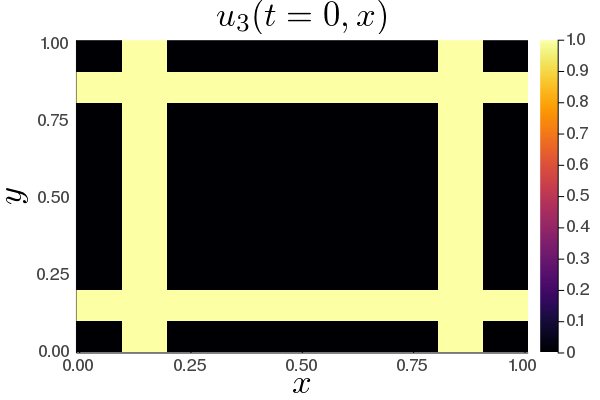}
 \caption{Initial profiles of the volume fractions.}
\label{fig:fig2}
\end{figure}

\begin{figure}[htb]
 \includegraphics*[width=5cm]{./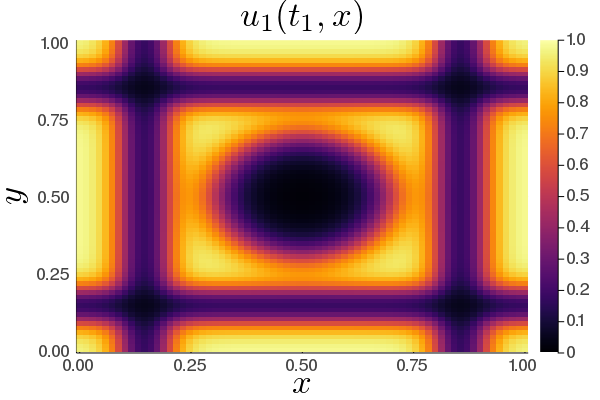}
  \includegraphics*[width=5cm]{./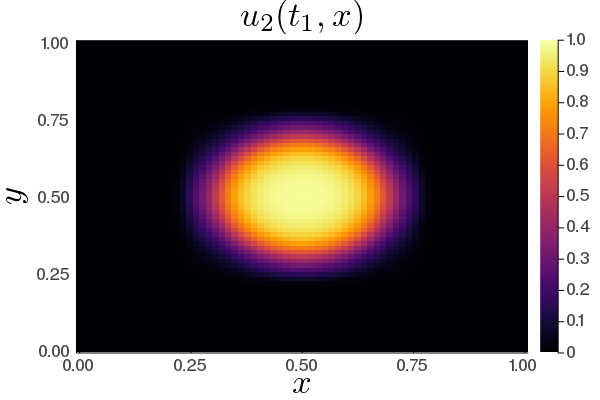}
   \includegraphics*[width=5cm]{./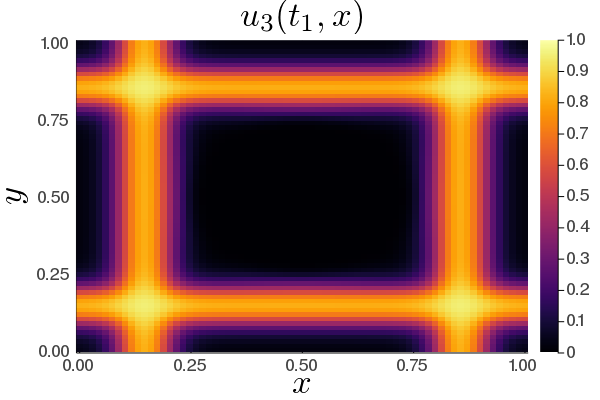}
 \caption{Profiles of the volume fractions at $t_1 = 8.5 \; 10^{-5}$.}
\label{fig:fig3}
\end{figure}

\begin{figure}[htb]
 \includegraphics*[width=5cm]{./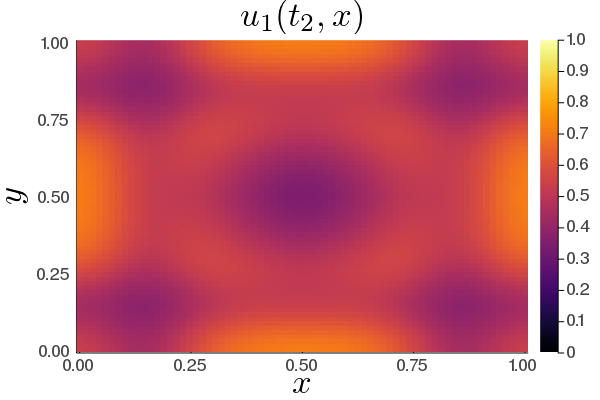}
  \includegraphics*[width=5cm]{./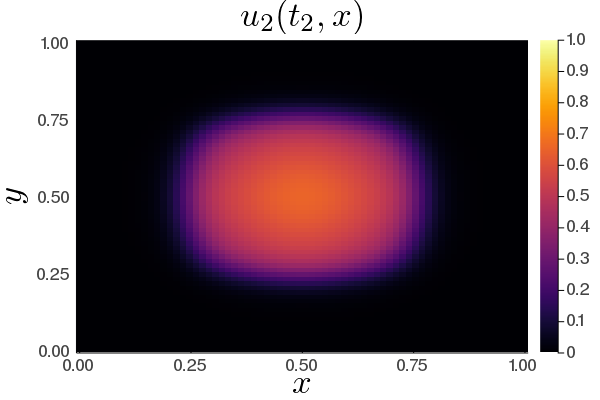}
   \includegraphics*[width=5cm]{./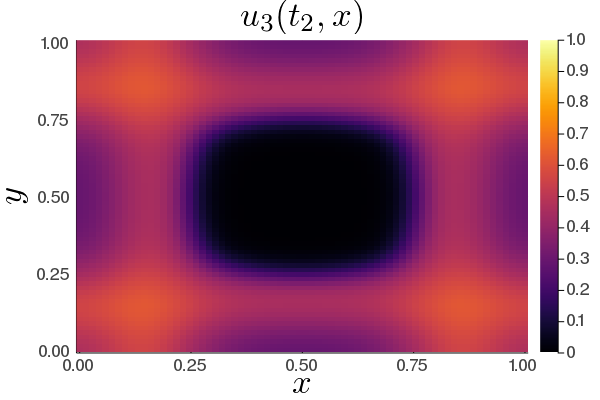}
 \caption{Profiles of the volume fractions at $t_2 = 1 \; 10^{-3}$.}
\label{fig:fig4}
\end{figure}

\clem{Our last figure is there to highlight both the decay of the discrete entropy 
and the exponential convergence towards equilibrium 
of the approximate solution. The exponential convergence in the continuous case was 
established in~\cite{jungel2013existence} thanks to a Logarithmic Sobolev inequality. 
A discrete counterpart of this inequality has been proved in~\cite{BCJ14}, allowing to 
show the exponential convergence of the approximate solution towards the constant in space 
equilibrium following the lines of~\cite{jungel2013existence}. We omit the proof here 
and rather provide a numerical evidence. 
}

\clem{
Define $\bbm:= {(m_{i,K})}_{i,K}\in (\mathbb{R}^\cT)^n$ by 
\[
m_{i,K} = \frac{1}{|\Omega|}M_i, \quad \forall 1\leq i \leq n, \quad \forall K\in \mathcal T, 
\]
and $M_i$ is defined by \eqref{eq:masspos.init}, and by 
\[
H_\cT(\bbu^p|\bbm) = \sum_{K\in\cT} \sum_{i=1}^n m_K u_{i,K}^p \log\left(\frac{u_{i,K}^p}{m_{i,K}}\right)
 = E_\cT(\bbu^p) - E_{\cT}(\bbm) \geq 0
\]
the relative entropy between the approximate solution $\bbu^p$ at the $p^\text{th}$ time step and 
the long-time limit of $u$. Figure~\ref{fig:fig5} shows that our approximate solution converges 
exponentially fast towards the right long-time limit. The exponential convergence in $L^1$ can 
then be deduced from a Csisz\'ar-Kullback inequality. 
}

\begin{figure}[htb]
\begin{tikzpicture}
	\begin{semilogyaxis}[xlabel=$t$,ylabel=relative entropy, width=0.5\linewidth]
	\addplot[mark=none, color = red] table[x=time, y=EntRel] {entropy2D.dat};
	\end{semilogyaxis}
\end{tikzpicture}
 \caption{Evolution of the relative entropy $H_{\mathcal T}(\bbu^p|\bbm)$ as a function of time. }
\label{fig:fig5}
\end{figure}
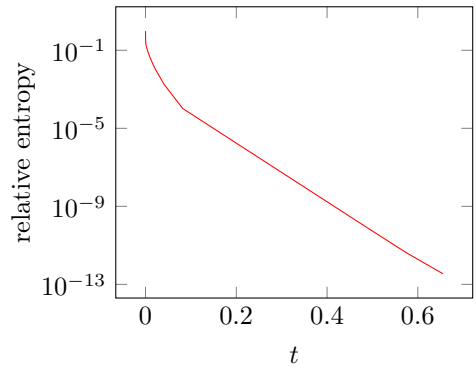

\subsection*{Acknowledgements}

The authors acknowledge support from project COMODO (ANR-19-CE46-0002). CC also acknowledges support from Labex CEMPI (ANR-11-LABX-0007-01).

\bibliographystyle{plain}
\bibliography{biblio}

\end{document}